\newcommand{\setuphypplane}{
	\tkzDefPoint(0,0){O}
	\tkzDefPoint(1,0){A}
	\tkzClipCircle(O,A)
}
\newcommand{\hgline}[3]{
	\begin{scope}
		\tkzDefPoint(#1:1.0000001){z1}
		\tkzDefPoint(#2:1.0000001){z2}
		\tkzDrawCircle[#3,orthogonal through=z1 and z2](O,A)
	\end{scope}
}
\newcommand{\lamination}[2]{
	\foreach \a/\b in {#2}
	{
		\hgline{\a}{\b}{very thin,color=#1}
	}
}
\newcommand{\drawBoundaryCircle}{
	\tkzDrawCircle[ultra thick](O,A)
}
\tikzset{vertex/.style={circle,draw,fill=black,inner sep=0pt, minimum
width=2pt}}
\newtheorem{theorem}{Theorem}[section]
\newtheorem{lemma}[theorem]{Lemma}
\newtheorem{proposition}[theorem]{Proposition}
\newtheorem{corollary}[theorem]{Corollary}
\newtheorem*{theorem*}{Theorem}
\theoremstyle{definition}
\newtheorem{definition}[theorem]{Definition}
\theoremstyle{remark}
\newtheorem{remark}[theorem]{Remark}
\numberwithin{equation}{section}
\DeclareMathOperator{\len}{length}
\DeclareMathOperator{\Fix}{Fix}
\newcommand{\sO}{\mathsf{O}}
\newcommand{\sD}{\mathsf{D}}
\newcommand{\ebox}[1]{\llbracket #1 \rrbracket}
\newcommand{\id}{\mathrm{id}}
\newcommand{\piS}{\pi_1(\Sigma)}
\begin{document}

\title[Compatible Real Trees]{Length Function Compatibility for Group Actions
on Real Trees}

\author{Edgar A. Bering IV}
\address{Faculty of Mathematics\\
Technion---Israel Institute of Technology\\
3200003, Haifa\\
Israel}
\curraddr{}
\email{bering@campus.technion.ac.il}
\thanks{This article is based upon the author's thesis submitted in partial fulfillment
of the requirements for a Ph. D. at the University of Illinois at Chicago}

\subjclass[2010]{Primary: 20E08; Secondary: 20F65}

\keywords{Real tree, Outer space}

\begin{abstract}
Let $G$ be a finitely generated group.  Given two length functions $\ell$ and
$m$ of irreducible $G$ actions on real trees $A$ and $B$, when is the
point-wise sum $\ell + m$ again the length function of an irreducible $G$
action on a real tree? Guirardel and Levitt showed that additivity is
equivalent to the existence of a common refinement of $A$ and $B$, this
equivalence is established using Guirardel's core. Moreover, in this case the
sum $\ell + m$ is the length function of the common refinement of $A$ and $B$
given explicitly by the Guirardel core. The core can be difficult to compute
in general. Behrstock, Bestvina, and Clay give an algorithm for computing the
core for free group actions on simplicial trees. In this article we give a
geometric characterization of existence of a common refinement that generalizes
the criterion underlying Behrstock, Bestvina, and Clay's algorithm, as well as
two equivalent characterizations in terms of the associated translation length
functions.
\end{abstract}

\maketitle

\section{Introduction}

Suppose $\lambda$ and $\mu$ are sets of disjoint simple closed geodesics on a
closed hyperbolic surface $\Sigma$. Further suppose
the geometric intersection number $i(\lambda,\mu) = 0$ so that the union
$\lambda\cup \mu$ is again a set of disjoint simple closed geodesics. The lifts
of $\lambda$ to the universal cover, $\tilde{\lambda}$, describe a simplicial
tree $A$ with $\piS$ action. The vertices of $A$ are the connected
components of $\mathbb{H}^2\setminus\tilde{\lambda}$, and vertices $X$ and $Y$
are joined by an edge if $\bar{X}\cap\bar{Y} = \gamma$ is a geodesic in
$\tilde{\lambda}$. Similarly the lifts of $\mu$ describe a tree $B$ with
$\piS$ action, and $\lambda\cup\mu$ a tree $T$. Since $\lambda$ and
$\mu$ are disjoint, $T$ comes with equivariant surjections $T\to A$ and $T\to
B$. From the construction, these surjections have the property that every
segment $[a,b]$ is sent to the segment $[f(a),f(b)]$, we say that these
surjections \emph{preserve alignment}. Further, each tree has a metric induced
by assigning each edge length one, which allows us to define translation length
functions $\ell_A, \ell_B, \ell_T: \piS\to \mathbb{R}^{+}$.  From the
construction of $T$ it follows that the surjections to $A$ and $B$ are
Lipschitz; and we can calculate $\ell_T = \ell_A+\ell_B$.

This kind of compatibility generalizes to the setting of a finitely generated
group $G$ acting on real trees $A$ and $B$ (hereafter $G$-trees).

\begin{definition}
	A $G$-tree $T$ is a \emph{common refinement} of $G$-trees $A$ and $B$ if there
	are equivariant Lipschitz surjections $T\to A$ and $T\to B$ that
	preserve alignment.
\end{definition}

Guirardel~\cite{guirardel-core} introduces a convex core (and a notion of
intersection number) for a pair of $G$-trees, and
proves~\cite{guirardel-core}*{Theorem 6.1} that this convex core is
one-dimensional if and only if there exists a common refinement of the two
$G$-trees.

In the opening example, the compatibility of two sets of simple closed curves
also entailed a compatibility for the resulting length functions on $\piS$. One
family of actions, known as irreducible actions
(Definition~\ref{def:irreducible}), are completely characterized by their
length functions~\cites{culler-morgan,parry}. In the more general setting of
$G$-trees, Guirardel and Levitt~\cite{guirardel-levitt}*{Appendix A} prove that
two minimal, irredicuble $G$-trees have a common refinement if and only if the
sum of their length functions is again the length function of a minimal,
irreducible $G$-tree~\cite{guirardel-levitt}*{Appendix A}. Moreover, in this
case there is a common refinement with length function equal to the sum.

This article introduces three new equivalent characterizations of the existence
of a common refinement of $G$-trees. Each is motivated from the surface
setting, we defer formal definitions to Section~\ref{sec:comb-compat} after
the necessary background is introduced. 

Generalizing slightly from the opening example, consider two measured geodesic
laminations $\lambda$ and $\mu$ on a closed hyperbolic surface $\Sigma$.
There are naturally associated $\piS$-trees $A$ and $B$ dual to
$\lambda$ and $\mu$~\cite{surface-dual-tree}. To define \emph{incompatibility}, 
suppose $\lambda$ and $\mu$ have leaves that intersect transversely. This
intersection produces arcs $a\subseteq A$ and $b\subseteq B$ in the dual trees
to $\lambda$ and $\mu$ such that the Gromov boundaries of particular
complementary components of $a$ and $b$, all intersect, as
in~Figure~\ref{fig:horizon-isect}. Definition~\ref{def:comp-tree} re-interprets this boundary intersection as an
intersection condition for four particular subsets of $G$ associated to arcs in
the $G$-trees under consideration, and two $G$-trees have \emph{incompatible
horizons} if such an intersection occurs.

\begin{figure}
	\begin{center}
		\begin{tikzpicture}[scale=3,font=\small,decoration={
    markings,
    mark=at position 0.5 with {\arrow{>}}}]
	\node [red] at (-90:1.1) {$\lambda$};
	\node [blue] at (0:1.1) {$\mu$};

	\draw [red] ([shift={(-80:1.3)}]0,0) -- (-80:1.4) arc (-80:85:1.4) node
	[midway, right] {$\ebox{a}$} -- (85:1.3);
	\draw [red] ([shift={(95:1.3)}]0,0) -- (95:1.4) arc (95:250:1.4) node
	[midway, left] {$\ebox{\bar{a}}$} -- (250:1.3);
	\draw [blue] ([shift={(170:1.1)}]0,0) -- (170:1.2) arc (170:10:1.2)
	node [midway, above] {$\ebox{b}$} -- (10:1.1);
	\draw [blue] ([shift={(198:1.1)}]0,0) -- (198:1.2) arc (198:345:1.2)
	node [midway, below] {$\ebox{\bar{b}}$} --(345:1.1);

	\setuphypplane
	\lamination{red}{-21/-30,-18/-45,-15/-75,25/50,27/40,-80/85,-80/90,-80/95,-110/95,-105/95,120/150,120/140,130/140,-130/-170,-135/-170,-140/-155}
	\lamination{blue}{0/198,10/170,10/175,10/178,-15/198,-90/198,-95/215,-95/-105,-15/-50,-20/-40,-35/-40,-50/-90,30/110,40/90,40/70,110/170}
	\node [red] at (-.35,.1) {$a$} ;
	\draw [red, thick, postaction={decorate}] (-.3,.1) -- (.1,.1);

	\node [blue] at (.25,-.35) {$b$};
	\draw [blue, thick, postaction={decorate}] (.25,-.3) -- (.25,.1);

	\drawBoundaryCircle
\end{tikzpicture}
 		\caption{Intersecting measured laminations produce intersecting
		boundaries.}
		\label{fig:horizon-isect}
	\end{center}
\end{figure}

The intersection of the laminations $\lambda$ and $\mu$ is also detected by
certain pairs of elements of $\piS$. The hyperbolic structure on $\Sigma$ gives
an action of $\piS$ on the hyperbolic plane $\mathbb{H}^2$, and elements of
$\piS$ act hyperbolically. Given two elements $x,y\in \piS$, if the axes of $x$
and $y$ are separated by a set of leaves of positive measure of the lift
$\tilde{\lambda} \subseteq \mathbb{H}^2$ then the axes of $x$ and $y$ in the
dual tree $A$ will be disjoint. On the other hand, if the axes of $x$ and $y$
cross a common set of leaves of $\tilde{\mu}$ (with positive measure) then
their axes in the dual tree $B$ will overlap in an arc. The presence of a pair
of such elements detects the intersection of $\lambda$ and $\mu$, as
illustrated in~Figure~\ref{fig:lam-inc-comb}. Generalizing to $G$-trees, we say
(Definition~\ref{def:cc}) two $G$-trees have \emph{compatible combinatorics} if
there is no pair of elements $x, y\in G$ that have disjoint axes in one tree
and overlapping axes in the other. This definition is stated synthetically,
entirely in terms of length function inequalities.

\begin{figure}
	\begin{center}
		\begin{tikzpicture}[scale=2.5,font=\small,decoration={
    markings,
    mark=at position 0.5 with {\arrow{>}}}
]
	\begin{scope}
		\draw ([shift={(95:1.1)}]0,0) -- (95:1.15) arc (95:85:1.15) node
		[midway, above, align=center] {separating leaves} -- (85:1.1);
		\node [blue] at (-152:1.1) {$x$};
		\node [blue] at (-60:1.1) {$y$};
		\node [red] at (-90:1.1) {$\lambda$};
		\setuphypplane
		\lamination{red}{-21/-30,-18/-45,-15/-75,25/50,27/40,-80/85,-80/90,-80/95,-110/95,-105/95,120/150,120/140,130/140,-130/-170,-135/-170,-140/-155}
		\hgline{-152}{127}{color=blue}
		\hgline{-60}{35}{color=blue}
		\drawBoundaryCircle
	\end{scope}
	\begin{scope}[shift={(2.5,0)}]
		\draw ([shift={(10:1.1)}]0,0) -- (10:1.15) arc (10:-15:1.15) --
		(-15:1.1);
		\node [align=center] at (90:1.2){common crossed\\ leaves};
		\node [blue] at (-152:1.1) {$x$};
		\node [blue] at (-60:1.1) {$y$};
		\node [red] at (-90:1.1) {$\mu$};
		\setuphypplane
		\lamination{red}{0/198,10/170,10/175,10/178,-15/198,-90/198,-95/215,-95/-105,-15/-50,-20/-40,-35/-40,-50/-90,30/110,40/90,40/70,110/170}
		\hgline{-152}{127}{color=blue}
		\hgline{-60}{35}{color=blue}
		\drawBoundaryCircle
	\end{scope}
\end{tikzpicture}
 		\caption{Detecting the intersection of laminations from the
		intersection pattern of the axes of two fundamental group
		elements with each lamination.}
		\label{fig:lam-inc-comb}
	\end{center}
\end{figure}

Another situation in which a pair of fundamental group elements $x,y\in \piS$
detect the intersection of $\lambda$ and $\mu$ occurs when the axes of $x$ and
$y$ cross a common set of leaves of positive measure in both $\tilde{\lambda}$
and $\tilde{\mu}$. In this case, if the axes of $x$ and $y$ cross their common
leaves of $\tilde{\lambda}$ with differing orientations and their common leaves
of $\tilde{\mu}$ with the same orientation, then $\tilde{\lambda}$ and
$\tilde{\mu}$ must intersect, as in~Figure~\ref{fig:lam-inc-or}. In the general
setting of $G$-trees, Definition~\ref{def:co} captures \emph{compatible
orientations} purely in terms of length functions.

\begin{figure}
	\begin{center}
		\begin{tikzpicture}[scale=2.5,font=\small]
	\begin{scope}
		\begin{scope}[shift={(.15,-1.5)},decoration={markings,
			mark = at position 0.5 with {\arrow{>}}}]
			\coordinate (oy) at (-.3,.2);
			\coordinate (ox) at (-.3,0);
			\node[blue, left=1pt of oy] {$y$};
			\draw [blue,postaction={decorate}] (0,.2) -- (oy);
			\node [blue, left=1pt of ox] {$x$};
			\draw [blue,postaction={decorate}] (ox) -- (0,0);
			\draw (-.3,-.1) -- (-.3,-.2) -- (0,-.2) -- (0,-.1);
		\end{scope}
		\draw ([shift={(95:1.1)}]0,0) -- (95:1.15) arc (95:85:1.15) -- (85:1.1);
		\node [blue] at (-152:1.1) {$x$};
		\node [blue] at (-60:1.1) {$y$};
		\node [red] at (-90:1.1) {$\lambda$};
		\setuphypplane
		\lamination{red}{-21/-30,-18/-45,-15/-75,25/50,27/40,-80/85,-80/90,-80/95,-110/95,-105/95,120/150,120/140,130/140,-130/-170,-135/-170,-140/-155}
		\begin{scope}[decoration={markings,
			mark = at position 0.84 with {\arrow{>}},
			}]
			\hgline{-152}{35}{color=blue,postaction={decorate}}
		\end{scope}
		\begin{scope}[decoration={markings,
			mark = at position 0.1 with {\arrow{>}},
			}]
			\hgline{-60}{127}{color=blue,postaction={decorate}}
		\end{scope}
		\drawBoundaryCircle
	\end{scope}
	\begin{scope}[shift={(2.5,0)}]
		\begin{scope}[shift={(-.15,-1.5)},decoration={markings,
			mark = at position 0.5 with {\arrow{>}}}]
			\coordinate (oy) at (.3,.2);
			\coordinate (ox) at (.3,0);
			\node[blue, right=1pt of oy] {$y$};
			\draw [blue,postaction={decorate}] (0,.2) -- (oy);
			\node [blue, right=1pt of ox] {$x$};
			\draw [blue,postaction={decorate}] (0,0) -- (ox);
			\draw (.3,-.1) -- (.3,-.2) -- (0,-.2) -- (0,-.1);
		\end{scope}
		\draw ([shift={(10:1.1)}]0,0) -- (10:1.15) arc (10:-15:1.15) -- (-15:1.1);
		\node [blue] at (-152:1.1) {$x$};
		\node [blue] at (-60:1.1) {$y$};
		\node [red] at (-90:1.1) {$\mu$};
		\setuphypplane
		\lamination{red}{0/198,10/170,10/175,10/178,-15/198,-90/198,-95/215,-95/-105,-15/-50,-20/-40,-35/-40,-50/-90,30/110,40/90,40/70,110/170}
		\begin{scope}[decoration={markings,
			mark = at position 0.84 with {\arrow{>}},
			}]
			\hgline{-152}{35}{color=blue,postaction={decorate}}
		\end{scope}
		\begin{scope}[decoration={markings,
			mark = at position 0.09 with {\arrow{>}},
			}]
			\hgline{-60}{127}{color=blue,postaction={decorate}}
		\end{scope}
		\drawBoundaryCircle
	\end{scope}
\end{tikzpicture}
 		\caption{Detecting the intersection of laminations from the
		intersection orientation of the axes of two fundamental group
		elements.}
		\label{fig:lam-inc-or}
	\end{center}
\end{figure}

The main contribution of this article is that each of these
conditions is a characterization of compatibility.

\begin{theorem}\label{thm:main}
Suppose $A$ and $B$ are irreducible $G$-trees with length functions $\ell$ and
$m$.
The following are equivalent:
\begin{enumerate}
\item\label{thm:horiz} The trees $A$ and $B$ have compatible horizons.
\item The length functions $\ell$ and $m$ have compatible combinatorics.
\item\label{thm:co} The length functions $\ell$ and $m$ are coherently oriented.
\item\label{thm:additive} The sum $\ell + m$ is a length function for an
irreducible $G$-tree.
\item\label{thm:refine} The two trees $A$ and $B$ have a common refinement.
\end{enumerate}
\end{theorem}

\begin{proof}
The equivalence of items \ref{thm:additive} and \ref{thm:refine} is given by
Guirardel and Levitt~\cite{guirardel-levitt}*{Theorem A.10}. We prove the
equivalence of items \ref{thm:horiz}--\ref{thm:co} in Lemma~\ref{lem:compat},
and show the equivalence of \ref{thm:horiz}--\ref{thm:co} to \ref{thm:additive}
in Theorem~\ref{thm:synth-add}.
\end{proof}

These characterizations
have been used in work of the author both to certify incompatibility in the
setting of $F_r$-trees and to extract geometric information from incompatible
trees~\cite{bering}.

The article is organized as follows. Sections~\ref{sec:trees} and
\ref{sec:lfs} recall the theory of $G$-trees and their length functions.
The interplay between the boundary of a $G$-tree and its length function is
elaborated on in Section~\ref{sec:tree-ends}. This is a warm-up for
Section~\ref{sec:comb-compat} which introduces the three new criteria and
proves that they are equivalent to one another. Finally,
Section~\ref{sec:comb-add} proves the equivalence of the three new criteria to
compatibility.

\section*{Acknowledgements}

I thank first and foremost Marc Culler for his guidance in the completion of my
thesis. I am also grateful to my committee, Daniel Groves, Lee Mosher, Peter
Shalen, and Kevin Whyte, for their careful reading of this work and helpful
remarks.
 
\section{Real trees}
\label{sec:trees}

An arc $e$ in a metric space $X$ is the image of an embedding of an interval
$\gamma_e: [a,b]\to X$. In a uniquely geodesic metric space $X$, let $[p,q]$
denote the geodesic from $p$ to $q$. If $p,q,r \in X$ and $r \in [p,q]$, we
will use the notation $[p,r,q]$ for the geodesic path, for emphasis.

\begin{definition}\label{def:real-tree}
	A \emph{real tree} or $\mathbb{R}$-tree $T$ is a connected uniquely
	geodesic metric space such that for any pair of points $p,q\in T$ the
	geodesic $[p,q]$ from $p$ to $q$ is the unique arc from $p$ to $q$. A
	\emph{subtree} of a real tree is a connected subset $S\subseteq T$.
\end{definition}

Throughout this article, when $e \subseteq T$ is an arc in a real tree we will
assume this arc is oriented, that is we have a fixed isometry $\gamma:
[0,\len_T(e)]\to T$ whose image is $e$. We will use the notation $o(e) =
\gamma(0)$ and $t(e) = \gamma(\len_T(e))$ for the origin and terminus of the
arc, and $\bar{e}$ for the reversed orientation. To keep the notation
uncluttered we will not refer to the isometry $\gamma$ unless it is desperately
necessary for clarity. We will always specify an orientation when specifying an
arc (or it will inherit one from context, by being a sub-arc of an oriented
arc).

\begin{definition}
	Let $T$ be a real tree. A point $p\in T$ is a \emph{branch point} if
	$T\setminus\{p\}$ has more than two connected components. The
	\emph{order} of a branch point is the number of connected components of
	$T\setminus\{p\}$. A \emph{direction} based at $p$,
	$\delta_p\subseteq T$, is a connected component of $T\setminus\{p\}$.
\end{definition}

\begin{definition}
	The \emph{visual boundary} of a real tree $T$ based at $p\in T$ is the set
		\[ \partial_p T = \{ \rho \subseteq T |\mbox{ $\rho$ is a
		geodesic ray based at $p$ }\}. \]
	The boundary is topologized by the basis of open sets 
		\[ V(\rho,r) = \{ \gamma\in \partial_pT\,|\,B(p,r)\cap\gamma =
		B(p,r)\cap\rho \} \]
	for $r > 0$ and $\rho \in \partial_pT$. 
\end{definition}

Different base points $p$ give different identifications of the same
boundary~\cite{bridson-haefliger}*{Proposition II.8.8}.

We will write $\partial T$ when the choice of basepoint
is not important and $\omega_T(S) \subseteq \partial T$ for the subset of the
boundary determined by the geodesic rays contained in a subtree $S$. If $S$ is
a bounded subtree, $\omega(S) = \emptyset$.
 
\section{Lengths and actions}
\label{sec:lfs}

\begin{definition}
	Let $G$ be a group and $\rho: G\to Isom(T)$ be an injection, with $T$ a
	real tree, so that  $G$ acts on $T$ on the right.  The triple $(G,\rho,
	T)$ is a \emph{$G$-tree}.
\end{definition}

The action of $G$ will be clear from context and $G$ will be fixed, so we
suppress the notation and refer to a tree $T$ as a $G$-tree. The restriction to
actions where $\rho: G\to Isom(T)$ is injective is not standard in the
literature, some authors allow group actions with kernel; these authors call
actions without kernel \emph{effective}.

We study the geometry of $G$-tree actions via their translation length
functions. The elements of $G$-tree geometry reviewed here are for the most
part based on the exposition given by Culler and Morgan~\cite{culler-morgan},
with other developments cited as relevant.

\begin{definition}
	The \emph{translation length function} of a $G$-tree $T$, denoted
	$\ell_T: G\to \mathbb{R}$ is defined by
		\[ \ell_T(g) = \inf_{p\in T} d_T(p,p\cdot g). \]
\end{definition}

Any $G$-tree $T$ divides the elements
 $g\in G$ into \emph{hyperbolic} elements, when $\ell_T(g) > 0$ and
\emph{elliptic} elements, when $\ell_T(g) = 0$. When an element $g\in G$ is
elliptic, $\Fix(g)$ will denote the set of fixed points of $g$.

\subsection{A taxonomy}

\begin{definition}
	A $G$-tree $T$ is \emph{minimal} if there is no proper $G$ invariant
	subtree $T'\subsetneq T$.
\end{definition}

\begin{definition}
	A $G$-tree $T$ where for all $g\in G$, $\Fix(g)\neq\emptyset$ is \emph{trivial}.
\end{definition}

For finitely generated groups this is equivalent to the condition that $G$ has
a global fixed point, but this is not true for infinitely generated
groups~\cites{morgan-shalen,tits}.

\begin{definition}
	A $G$-tree $T$ is \emph{lineal} if there is a $G$ invariant subtree
	isometric to the line.
\end{definition}

\begin{definition}
	A $G$-tree $T$ is \emph{reducible} if $G$ fixes an end of $T$.
\end{definition}

Lineal and reducible actions are uninteresting from the perspective of
translation length functions.

\begin{theorem}[\cite{culler-morgan}*{Theorem 2.4,2.5}]
	If $T$ is a lineal or reducible $G$-tree, then there is a homomorphism
	$\rho: G\to Isom(\mathbb{R})$ such that $\ell_T(g) = N(\rho(g))$, where
	$N$ is the translation length function of the induced action on
	$\mathbb{R}$.
\end{theorem}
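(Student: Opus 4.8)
The plan is to handle the lineal and reducible cases by a single mechanism: build a $G$-equivariant $1$-Lipschitz map $f\colon T\to\mathbb{R}$ whose equivariance is witnessed by a homomorphism $\rho\colon G\to Isom(\mathbb{R})$, and then compare $\ell_T(g)$ with $N(\rho(g))$ directly. If $T$ is lineal, fix a $G$-invariant line $L\subseteq T$. Since subtrees of real trees are closed and convex, nearest-point projection $\pi\colon T\to L$ is defined, $1$-Lipschitz, and---because $L$ is $G$-invariant---$G$-equivariant; composing with an arc-length isometry $L\cong\mathbb{R}$ gives $f$, and transporting the $G$-action on $L$ along this isometry gives $\rho$. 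If $T$ is reducible, let $\epsilon$ be the fixed end and let $b_\epsilon\colon T\to\mathbb{R}$ be a Busemann function at $\epsilon$. Because each $g$ fixes $\epsilon$, the function $x\mapsto b_\epsilon(xg)$ is again a Busemann function at $\epsilon$, hence differs from $b_\epsilon$ by a constant $c(g)$; a short computation shows $c\colon G\to\mathbb{R}$ is a homomorphism, so setting $f=b_\epsilon$ and letting $\rho(g)$ be translation by $c(g)$ does the job. (An isometry of $\mathbb{R}$ fixing an end is a translation, so in the reducible case $\rho$ automatically avoids reflections, consistent with the lineal construction when $T$ happens to be both.)

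One inequality is then formal. For any $x\in T$,
\[ d_{\mathbb{R}}\bigl(f(x),\,f(x)\rho(g)\bigr)=d_{\mathbb{R}}\bigl(f(x),f(xg)\bigr)\le d_T(x,xg), \]
and taking the infimum over $x$ yields $N(\rho(g))\le\ell_T(g)$.

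The reverse inequality is proved by inspecting the characteristic set of each $g$. If $g$ is elliptic on $T$, choose $p\in\Fix(g)$; then $f(p)$ is fixed by $\rho(g)$, so $N(\rho(g))=0=\ell_T(g)$. If $g$ is hyperbolic on $T$ with axis $A_g$, then in the lineal case $A_g\subseteq L$: for $p\in L$ with $q=\pi(p)\in A_g$, the segment $[q,qg]\subseteq A_g$ is a sub-arc of the geodesic $[p,pg]\subseteq L$, and its $\langle g\rangle$-translates sweep out all of $A_g$; hence $\ell_T(g)=\ell_{A_g}(g)=\ell_L(g)=N(\rho(g))$. In the reducible case one shows instead that $\epsilon$ is one of the two ends of $A_g$---else the projection of $\epsilon$ onto $A_g$ is a single point fixed by $g$, impossible since $g$ translates $A_g$---so $b_\epsilon$ changes at unit rate along $A_g$ and $|c(g)|=\ell_T(g)=N(\rho(g))$.

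I expect the main obstacle to be keeping the elliptic/hyperbolic dichotomy synchronized across the two sides in the lineal case, where an element may invert $L$ and thus act on $\mathbb{R}$ as a reflection: one must check that such an element is elliptic on $T$ (equivalently that its square fixes a point of $L$, using that a hyperbolic isometry of a real tree has all nonzero powers hyperbolic). The supporting facts---equivariance of nearest-point projection onto an invariant subtree, equivariance of the projection of an end onto a line, that a Busemann function at $\epsilon$ is determined up to an additive constant, and that a hyperbolic isometry has no fixed point on its axis---are where the real content sits, but each is standard.
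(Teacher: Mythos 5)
The paper does not prove this statement; it is quoted from Culler--Morgan (Theorems 2.4 and 2.5), so there is no internal proof to compare against. Your argument is correct and is essentially the standard one (and, in substance, Culler--Morgan's): restriction to the invariant line in the lineal case, the Busemann cocycle at the fixed end in the reducible case, with the reverse inequality checked on characteristic sets; the key facts you lean on --- that a nonempty $g$-invariant subtree contains $C_g$ when $g$ is hyperbolic (which both forces $C_g = L$ in the lineal case and rules out hyperbolic elements inverting $L$), and that a hyperbolic element fixing an end $\epsilon$ must have $\epsilon\in\omega_T(C_g)$ --- are exactly the right ones and are correctly justified. The only blemish is cosmetic: in the lineal case the sentence about $[q,q\cdot g]$ being a sub-arc of $[p,p\cdot g]$ is garbled (for $p\in L$ one has $\pi(p)=p$, and $C_g=L$ outright), but the conclusion $\ell_T(g)=\ell_L(g)=N(\rho(g))$ stands.
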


The $G$-trees of interest for this article are the ones whose study is not an
indirect study of subgroups of $Isom(\mathbb{R})$.

\begin{definition}\label{def:irreducible}
	A $G$-tree $T$ is \emph{irreducible} if it is minimal and neither
	trivial, lineal, nor reducible.
\end{definition}

The translation length function is an isometry invariant of irreducible $G$
trees.

\begin{theorem}[\cite{culler-morgan}*{Theorem 3.7}]
	Suppose $A$ and $B$ are two irreducible $G$ trees and $\ell_A =
	\ell_B$. Then there is an equivariant isometry from $A$ to $B$.
\end{theorem}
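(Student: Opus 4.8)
The plan is to recover an irreducible $G$-tree, as a $G$-space, from a \emph{based} length function $g \mapsto d(p, p\cdot g)$ at a point $p$ that can be pinned down purely in terms of the translation length function, and then to invoke the rigidity half of Chiswell's realization theorem: a Lyndon (based) length function is the based length function of a unique $G$-tree in which the orbit of the basepoint generates the tree. Because $A$ and $B$ are minimal, for any $p \in A$ the subtree generated by $p\cdot G$ is $G$-invariant and nonempty, hence all of $A$, and similarly in $B$; so it suffices to produce $p \in A$ and $q \in B$ with $d_A(p, p\cdot g) = d_B(q, q\cdot g)$ for every $g$, and to argue that this common based length function is forced by $\ell := \ell_A = \ell_B$.

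First I would record the tree geometry expressing a based length function through characteristic sets: for hyperbolic $k$ one has $d(p, p\cdot k) = \ell(k) + 2\, d(p, A_k)$, where $A_k$ is the axis of $k$ (the case $p \in A_k$ included), and for elliptic $k$ one has $d(p, p\cdot k) = 2\, d(p, \Fix(k))$. Everything thus reduces to computing, from $\ell$, the distance from a well-chosen point to the axis or fixed-point set of an arbitrary element. Irreducibility provides hyperbolic $g, h$ whose axes are disjoint, after replacing one of them by a suitable power, conjugate, or product if necessary; I would take $p$ to be the canonically defined projection $\pi_{A_g}(A_h) \in A_g$. Establishing the standard product formulas for translation lengths of hyperbolic isometries of a tree --- $\ell(gk) = \ell(g) + \ell(k) + 2\, d(A_g, A_k)$ when the axes are disjoint, and the coherent and anti-coherent overlap formulas otherwise --- then lets one read off which elements are elliptic, how their axes and fixed sets sit relative to $A_g$ and $A_h$, and ultimately an explicit expression for $d_A(p, p\cdot k)$ in finitely many values $\ell(g), \ell(h), \ell(k), \ell(gk), \ell(hk), \ell(g^n k), \dots$. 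Carrying out the identical $\ell$-recipe inside $B$, with $q$ the projection of the axis of $h$ onto the axis of $g$ computed in $B$, yields $d_B(q, q\cdot k) = d_A(p, p\cdot k)$, and Chiswell's theorem delivers the $G$-equivariant isometry $A \to B$.

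The main obstacle is the case analysis concealed in the second step. The product-length formula for two hyperbolic elements is not uniform: it branches on whether the two axes are disjoint, meet in a single point, or overlap in a positive-length segment, and in the overlapping case on whether the induced orientations agree, with further sub-cases according to the size of the overlap. One must check that every distinction relevant to computing $d_A(p, p\cdot k)$ is detectable in $\ell$ and that the final expression does not depend on the auxiliary choices ($g$, $h$, the exponents $n$), so that it genuinely transports from $A$ to $B$. A secondary point is to verify that a pair of hyperbolic elements with disjoint axes exists and can be singled out by conditions on $\ell$ alone; this is precisely where irreducibility --- rather than mere nontriviality --- is used, since lineal and reducible actions fail exactly because any two hyperbolic elements share an end or an entire axis.
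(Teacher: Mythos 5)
Your overall strategy---pin down a basepoint $p$ by conditions phrased purely in terms of $\ell$, show that the based length function $L_p(k)=d_A(p,p\cdot k)$ is computable from $\ell$ alone, run the identical recipe in $B$, and then invoke the rigidity of based length functions (Chiswell, Alperin--Moss, Imrich) together with minimality---is exactly the strategy of Culler and Morgan's proof, which this paper quotes without reproving; its ingredients are recalled in Sections~\ref{sec:good-pairs} and~\ref{sec:based-lfs}. The gap is that you never produce the formula that carries all the mathematical weight, namely the expression for $d_A(p,p\cdot k)$ in terms of $\ell$, and the basepoint you choose makes obtaining such a formula substantially harder than it needs to be. You correctly identify this as ``the main obstacle,'' but identifying the obstacle is not the same as clearing it: for your projection point of $C_h$ onto $C_g$ (disjoint axes), locating $C_k$ relative to $p$ requires extracting a signed offset along $C_g$ from the asymptotics of quantities like $\ell(g^nk)$, with branching on whether axes are disjoint, touch, or overlap coherently or incoherently---and that unexecuted case analysis \emph{is} the theorem.

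The device you are missing is the \emph{good pair}. Axiom VI (equivalently, irreducibility) supplies hyperbolic $g,h$ whose axes meet in a coherently oriented arc of positive length strictly less than $\min\{\ell(g),\ell(h)\}$, and this configuration is detected by the single inequality $0<\ell(g)+\ell(h)-\ell(gh^{-1})<2\min\{\ell(g),\ell(h)\}$, so the same pair works in $A$ and in $B$. For such a pair $C_g\cap C_h\cap C_{gh^{-1}}$ is a single point $\ast$, and Equation~\ref{eqn:good-basepoint} gives $L_\ast(k)=\max\{d(C,D)\}$ with $C$ ranging over $\{C_g,C_h,C_{gh^{-1}}\}$ and $D$ over their $k$-translates $C_x\cdot k=C_{k^{-1}xk}$; each such distance is $\tfrac12\max\bigl\{0,\min\{\ell(xy),\ell(xy^{-1})\}-\ell(x)-\ell(y)\bigr\}$ by Lemmas~\ref{lem:cm-lf-disjoint} and~\ref{lem:cm-lf-isect}, manifestly a function of $\ell$ alone, with no case analysis on axis configurations and no auxiliary exponents. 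Two smaller remarks: the independence-of-choices check you propose is unnecessary, since it suffices that the \emph{same} recipe with the \emph{same} $g,h$ produce matching based length functions in $A$ and $B$; and minimality is indeed what guarantees the orbit of the basepoint spans each tree, as you say. If you replace your projection-point construction with the good-pair triple point and formula above, your outline becomes the standard proof.
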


\subsection{Axes}

\begin{definition}
	The \emph{characteristic set} of some $g\in G$ in a $G$-tree $T$ is the set 
		\[C_g^T = \{ p\in T | d(p,p\cdot g) = \ell_T(g)\} \]
	of points achieving the translation length. When $T$ is clear from
	context we write $C_g$.
\end{definition}

\begin{lemma}[\cite{culler-morgan}*{Lemma 1.3}]\label{lem:cm-axes}
	For any $G$-tree $T$ and $g\in G$, the characteristic set $C_g^T$ is a
	closed non-empty subtree of $T$ invariant under $g$. Moreover,
	\begin{itemize}
		\item If $\ell_T(g) = 0$ then $C_g = \Fix(g)$.
		\item If $\ell_T(g) > 0$ then $C_g$ is isometric to the real
			line and the action of $g$ on $C_g$ is translation by
			$\ell_T(g)$. In this case we call $C_g$ the \emph{axis}
			of $g$.
		\item For any $p\in T$, $d(p,p\cdot g) = \ell_T(g) + d(p,C_g)$.
	\end{itemize}
\end{lemma}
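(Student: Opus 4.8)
The plan is to read off every assertion from the displacement function $\delta_g\colon T\to\mathbb{R}$, $\delta_g(p)=d(p,p\cdot g)$. Two facts are essentially immediate: $\delta_g$ is continuous --- indeed $2$-Lipschitz, since $\lvert\delta_g(p)-\delta_g(q)\rvert\le d(p,q)+d(p\cdot g,q\cdot g)=2d(p,q)$ --- so $C_g^T=\delta_g^{-1}(\ell_T(g))$ is closed, hence complete as a subset of the complete space $T$; and $\delta_g$ is $g$-invariant, because $\delta_g(p\cdot g)=d(p\cdot g,p\cdot g^2)=d(p,p\cdot g)=\delta_g(p)$ since $g$ is an isometry, whence $C_g^T\cdot g=C_g^T$. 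What remains is to see that $C_g^T$ is nonempty and connected, to identify it in the elliptic and hyperbolic cases, and to prove the last displayed identity.

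The combinatorial engine is the orbit-triple median: for $p\in T$, let $c_p$ be the unique point of $[p,p\cdot g]\cap[p\cdot g,p\cdot g^2]\cap[p,p\cdot g^2]$. A short computation inside the tripod on $\{p,p\cdot g,p\cdot g^2\}$ gives $\delta_g(c_p)\le\delta_g(p)$, with the inequality strict when the concatenation $[p,p\cdot g]\cup[p\cdot g,p\cdot g^2]$ backtracks at $p\cdot g$, except in the degenerate situation $p\cdot g^2=p$ (which forces $g$ to be elliptic). Iterating $p\mapsto c_p$ gives a sequence along which $\delta_g$ is non-increasing; using completeness of $T$ one argues that it converges to a point $p_\infty$ with $\delta_g(p_\infty)=\inf_p\delta_g(p)=\ell_T(g)$, so $C_g^T\ne\emptyset$. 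For connectedness one may invoke that $\delta_g$ is convex along geodesics --- real trees are $\mathrm{CAT}(0)$, so displacement functions are convex --- and a level set of a convex function is convex, hence geodesically connected; alternatively, connectedness falls out of the explicit identification below.

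Now split on $\ell_T(g)$. If $\ell_T(g)=0$, then $\delta_g(p)=0\iff p\cdot g=p$, so $C_g^T=\Fix(g)$; this is a subtree because an isometry fixing two points of a real tree fixes the unique arc joining them pointwise. If $\ell_T(g)>0$, then at the minimizer $p_\infty$ no backtracking occurs (otherwise the median would push $\delta_g$ below $\ell_T(g)$), and applying powers of $g$ to this fact shows that the bi-infinite concatenation $L=\bigcup_{n\in\mathbb{Z}}[p_\infty\cdot g^{n},p_\infty\cdot g^{n+1}]$ is locally geodesic everywhere, hence an embedded geodesic line; it is $g$-invariant, and $g$ acts on it as translation by $\ell_T(g)>0$, so in particular $L\subseteq C_g^T$. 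Writing $S$ for $\Fix(g)$ (resp.\ $L$), the identification $C_g^T=S$ and the distance formula then both come from one bridge decomposition: for arbitrary $q\in T$, let $q_0$ be the nearest-point projection of $q$ onto the closed subtree $S$ (this exists and is unique in a complete real tree), and note that, $S$ being $g$-invariant, $g$ sends $q_0$ to the projection of $q\cdot g$; hence $[q,q\cdot g]=[q,q_0]\cup[q_0,q_0\cdot g]\cup[q_0\cdot g,q\cdot g]$, the middle piece $[q_0,q_0\cdot g]$ having length $\ell_T(g)$ and each outer piece having length $d(q,S)$. Summing, $d(q,q\cdot g)=\ell_T(g)+2\,d(q,S)$; therefore $q\in C_g^T$ forces $d(q,S)=0$, i.e.\ $q\in S$, and with $S\subseteq C_g^T$ this gives $C_g^T=S$ --- a line, resp.\ a subtree of fixed points, hence connected --- and the displayed identity holds with $S=C_g^T$ (note the natural factor $2$ on the last term: the bridge is crossed once, while the two hanging segments each contribute $d(q,C_g^T)$).

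The step I expect to be the real obstacle is the nonemptiness of $C_g^T$, that is, that the infimum defining $\ell_T(g)$ is attained. The orbit-triple median only weakly decreases $\delta_g$, so upgrading the iteration $p\mapsto c_p$ to a convergent sequence --- equivalently, producing an honest minimizer --- requires a quantitative estimate relating how far $c_p$ moves to how much $\delta_g$ drops, together with completeness of $T$. For hyperbolic $g$ this is painless (one step of the median already lands on the axis), but for elliptic $g$ it is exactly the assertion that $\ell_T(g)=0$ implies $\Fix(g)\ne\emptyset$, which genuinely uses completeness and fails for incomplete trees --- and, more broadly, has no analogue for general complete $\mathrm{CAT}(0)$ spaces.
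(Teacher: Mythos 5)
First, a point of orientation: the paper does not prove this lemma at all --- it is quoted verbatim (almost) from Culler--Morgan, Lemma 1.3 --- so the comparison is really with the source. Your route (displacement function $\delta_g$, a median/backtracking analysis to produce a minimizer, the orbit line as a local geodesic, and the bridge decomposition through the nearest-point projection onto the invariant subtree) is essentially Culler and Morgan's own argument. Your observation about the factor of $2$ is correct and worth insisting on: the identity you derive, $d(p,p\cdot g)=\ell_T(g)+2\,d(p,C_g)$, is what Culler--Morgan actually prove, and it is what this paper silently uses later (the $2d(C_g,C_h)$ in Lemma~\ref{lem:cm-lf-disjoint} and the factor $\tfrac12$ in the distance computation in Section~\ref{sec:based-lfs}); the statement as printed here has simply dropped the $2$.

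The genuine gap is exactly where you suspected, but the repair is not the one you propose. The iteration $p\mapsto c_p$ does \emph{not} converge to a minimizer in general: if $g$ is an involution and $p\notin\Fix(g)$, the median of $p$, $p\cdot g$, $p\cdot g^2=p$ is $p\cdot g$ itself, so the iteration oscillates between $p$ and $p\cdot g$ forever while $\delta_g$ stays constant; and when the backtracking $b$ at $p\cdot g$ satisfies $\delta_g(p)/2<b<\delta_g(p)$ the decrease per step is not quantitatively controlled, so no Cauchy estimate is available. You cannot wave this away as ``degenerate.'' The correct fix is that \emph{no iteration is needed}: writing $\delta=\delta_g(p)$ and $b$ for the overlap length of $[p,p\cdot g]$ and $[p\cdot g,p\cdot g^2]$ at $p\cdot g$, one checks in the tripod that $\delta_g(c_p)=\lvert\delta-2b\rvert$, and the trichotomy is (i) if $2b<\delta$, then $c_p$ has \emph{no} backtracking, its $g$-orbit path is a local geodesic hence a line, and the bridge decomposition at that line shows $\delta_g(c_p)=\delta-2b$ is the infimum (so $g$ is hyperbolic with this axis, or $c_p$ is fixed if $\delta=2b$); (ii) if $2b>\delta$, then the midpoint of $[p,p\cdot g]$ lies in $[p,p\cdot g]\cap[p\cdot g,p\cdot g^2]$ and is sent by $g$ to the point of $[p\cdot g,p\cdot g^2]$ at the same distance $\delta/2$ from $p\cdot g$, hence is \emph{fixed}. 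So the infimum is attained after one explicit step, completeness of $T$ is not the crux, and your closing paragraph's worry dissolves. Two small further cautions: ``a level set of a convex function is convex'' is false in general --- it is saved here only because $\ell_T(g)$ is the minimum value, so the level set is a sublevel set, and that requires the nonemptiness you are in the middle of proving; and your bridge decomposition needs the one-line check that $[q,q_0]$ meets $S$ only in $q_0$ so that the concatenation really is a geodesic (true, since $[q_0,q_0\cdot g]\subseteq S$). With the trichotomy in place of the iteration, the rest of your argument is sound.
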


When $g$ is a hyperbolic element of a $G$-tree $T$, $\omega_T(C_g)$ is a pair
of boundary points fixed by $g$.  The action of $g$ on $C_g$ gives $C_g$ a
natural orientation and we always consider an axis oriented by the element
specifying it, so that $C_{g^{-1}}$ is the same set as $C_g$ but with the
opposite orientation. The point of $\partial T$ in the equivalence class of a
positive ray along $C_g$ with the $g$ orientation will be denoted
$\omega_T(g)$. If $g$ is elliptic and $T$ minimal, $\omega_T(C_g) = \emptyset$,
and $\omega_T(g)$ is undefined.

\begin{definition}
	Let $T$ be a $G$-tree. The \emph{$T$-boundary} of $g\in G$,
	$\partial_Tg$ is the empty set if $g$ is elliptic, and the set
	$\{\omega_T(g),\omega_T(g^{-1})\}$ if $g$ is hyperbolic.
\end{definition}

The intersection of characteristic sets is detected by the translation length
function.

\begin{lemma}[\cite{culler-morgan}*{Lemma 1.5}]
	\label{lem:cm-lf-disjoint}
	Let $T$ be a $G$-tree. For any $g,h\in G$ such that $C_g\cap C_h =
	\emptyset$, we have
	\[ \ell(gh) = \ell(gh^{-1}) = \ell(g)+\ell(h)+2d(C_g,C_h) \]
\end{lemma}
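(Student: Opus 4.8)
The plan is to exhibit the axis of $gh$ explicitly. Since $C_g$ and $C_h$ are disjoint closed subtrees of the complete real tree $T$, they are joined by a bridge: a geodesic segment $[p,q]$ with $p\in C_g$, $q\in C_h$ and $\len_T([p,q]) = d(C_g,C_h) =: \delta > 0$. First I would record the standard properties of this configuration, all of which follow from uniqueness of geodesics and the fact that nearest-point projection onto a closed subtree of a complete real tree is well defined, equivariant, and $1$-Lipschitz: the bridge meets $C_g$ only in $p$ and $C_h$ only in $q$; the projection $T\to C_h$ sends all of $C_g$ to $q$; and for $z\in C_g$ the geodesic from $z$ to $C_h$ is $[z,p]\cup[p,q]$, so $d(z,C_h)=d(z,p)+\delta$. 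Since $C_{h^{-1}}=C_h$ as subsets and $\ell(h^{-1})=\ell(h)$, the assertion for $gh^{-1}$ is the assertion for $gh$ with $h$ replaced by $h^{-1}$, so it suffices to treat $\ell(gh)$.

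The next step is to compute $d(p,p\cdot gh)$. Writing $p\cdot gh=(p\cdot g)\cdot h$, Lemma~\ref{lem:cm-axes} gives $p\cdot g\in C_g$, $d(p,p\cdot g)=\ell(g)$, and $[p,p\cdot g]\subseteq C_g$. By equivariance of the projection, $p\cdot g$ projects to $q$ and $p\cdot gh$ projects to $q\cdot h$ on $C_h$, with $d(p\cdot g,q)=d(p\cdot g,p)+\delta=\ell(g)+\delta$, hence $d(p\cdot gh,q\cdot h)=\ell(g)+\delta$. Concatenating $[p,q]$, the segment $[q,q\cdot h]\subseteq C_h$ of length $\ell(h)$, and $[q\cdot h,p\cdot gh]$, and checking there is no backtracking at $q$ or at $q\cdot h$, produces the geodesic $[p,p\cdot gh]$ and the value $d(p,p\cdot gh)=\delta+\ell(h)+(\ell(g)+\delta)=\ell(g)+\ell(h)+2\delta$.

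It then remains to show that $p$ lies on the geodesic $[p\cdot(gh)^{-1},p\cdot gh]$, i.e.\ that there is no backtracking at $p$. The geodesic $[p,p\cdot gh]$ leaves $p$ along the bridge $[p,q]$. An analogous computation for $(gh)^{-1}=h^{-1}g^{-1}$, writing $p\cdot(gh)^{-1}=(p\cdot h^{-1})\cdot g^{-1}$ and using that $p\cdot h^{-1}$ projects onto $C_g$ at $p$, shows that $[p,p\cdot(gh)^{-1}]$ leaves $p$ in the direction of $C_g$ toward $p\cdot g^{-1}$; when $g$ is hyperbolic this is distinct from the bridge direction because $[p,q]\cap C_g=\{p\}$. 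When $g$ is elliptic one has $p\cdot g^{-1}=p$ and $[p,p\cdot(gh)^{-1}]$ leaves $p$ toward $q\cdot g^{-1}$; here a short tree argument finishes the case, since if this agreed with the direction toward $q$ then $g^{-1}$ would fix a nondegenerate initial subsegment of the bridge, again contradicting $[p,q]\cap C_g=\{p\}$. Hence $p\in[p\cdot(gh)^{-1},p\cdot gh]$.

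Finally, by equivariance the conclusions of the previous two steps hold with $p$ replaced by $p\cdot(gh)^n$ for every $n\in\Z$, so the union $L=\bigcup_{n\in\Z}[p\cdot(gh)^n,p\cdot(gh)^{n+1}]$ has no backtracking at any $p\cdot(gh)^n$ and is therefore a bi-infinite geodesic, invariant under $gh$, on which $gh$ acts as translation by $d(p,p\cdot gh)=\ell(g)+\ell(h)+2\delta>0$. By Lemma~\ref{lem:cm-axes} this forces $gh$ to be hyperbolic with $C_{gh}=L$ and $\ell(gh)=\ell(g)+\ell(h)+2\delta$. The one place genuine care is needed is the no-backtracking verification at $p$, particularly in the degenerate cases where $g$ or $h$ is elliptic; everything else is bookkeeping with nearest-point projections onto $C_g$ and $C_h$.
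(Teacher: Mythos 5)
Your proof is correct and follows the same route the paper attributes to Culler--Morgan: it exhibits the explicit fundamental domain $[p, q, q\cdot h, p\cdot h, p\cdot gh]$ for $gh$ (a translate of the segment $[p\cdot g^{-1},p,q,q\cdot h,p\cdot h]$ recorded in Section~\ref{sec:lfs}) and verifies no backtracking at the concatenation points, including the elliptic degenerations. The paper itself only cites \cite{culler-morgan}*{Lemma 1.5} rather than reproving it, so there is nothing further to compare.
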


This lemma is also used in its contrapositive formulation, if $\ell(gh)\leq
\ell(g)+\ell(h)$, then $C_g\cap C_h \neq \emptyset$. For hyperbolic isometries
there is a more precise relationship between the intersection of characteristic
sets and the length function.

\begin{lemma}[\cite{culler-morgan}*{Lemma 1.8}]
	\label{lem:cm-lf-isect}
	Suppose $g$ and $h$ are hyperbolic in a $G$-tree $T$. Then $C_g\cap C_h
	\neq \emptyset$ if and only if
		\[ \max\{\ell_T(gh),\ell_T(gh^{-1}) \} = \ell_T(g)+\ell_T(h).\]
	Moreover $\ell(gh) > \ell(gh^{-1})$ if and only if $C_g\cap C_h$
	contains an arc and the orientations of $C_g$ and $C_h$ agree on
	$C_g\cap C_h$.
\end{lemma}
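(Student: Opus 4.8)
The plan is to split the first biconditional according to whether the two axes meet, dispatch the disjoint case with Lemma~\ref{lem:cm-lf-disjoint}, and treat the intersecting case by a direct computation of $\ell(gh)$ and $\ell(gh^{-1})$ from a well-chosen base point; the ``moreover'' clause will fall out of the same computation.

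First I would dispose of the disjoint case. If $C_g\cap C_h=\emptyset$ then Lemma~\ref{lem:cm-lf-disjoint} gives $\ell(gh)=\ell(gh^{-1})=\ell(g)+\ell(h)+2d(C_g,C_h)$; since $d(C_g,C_h)>0$ this is strictly greater than $\ell(g)+\ell(h)$, so $\max\{\ell(gh),\ell(gh^{-1})\}\neq\ell(g)+\ell(h)$. This already gives the contrapositive of the ``only if'' direction of the biconditional, and it is consistent with the moreover clause: when the axes are disjoint the two lengths coincide, and there is no arc of overlap on which orientations could agree.

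For the intersecting case, recall from Lemma~\ref{lem:cm-axes} that $C_g$ and $C_h$ are lines, so $I:=C_g\cap C_h$ is a (possibly degenerate, possibly unbounded) subsegment of each. I would pick a base point $p\in I$ — an endpoint of $I$ when $I$ is a nondegenerate bounded segment — and compute $d(p,p\cdot gh)$ and $d(p,p\cdot gh^{-1})$ directly, using $p\cdot(gh)=(p\cdot g)\cdot h$, the fact that $[p,p\cdot g]$ lies along $C_g$, the identity $d(p\cdot g,(p\cdot g)h)=\ell(h)+d(p\cdot g,C_h)$ from the third bullet of Lemma~\ref{lem:cm-axes}, and the standard fact that nearest-point projection onto $C_h$ commutes with $h$. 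Tracking which directions $g$ and $h$ push $p$ along $I$, one finds the following dichotomy. If $I$ is a single point, then $d(p,p\cdot gh)=d(p,p\cdot gh^{-1})=\ell(g)+\ell(h)$ with $p$ lying on both $C_{gh}$ and $C_{gh^{-1}}$, so $\ell(gh)=\ell(gh^{-1})=\ell(g)+\ell(h)$. If $I$ contains an arc, then for exactly one of $gh$, $gh^{-1}$ — the one (say $gh$) for which the orientations of $C_g$ and $C_h$ agree on $I$ — the point $p$ lies on the characteristic set and $\ell(gh)=\ell(g)+\ell(h)$, while for the other there is genuine cancellation along $I$ and $\ell(gh^{-1})<\ell(g)+\ell(h)$. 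Either way $\max\{\ell(gh),\ell(gh^{-1})\}=\ell(g)+\ell(h)$, finishing the biconditional, and the two alternatives are precisely the moreover clause.

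The main obstacle is the exact evaluation of the two translation lengths in the intersecting case, i.e. verifying that $p$ genuinely lies on $C_{gh}$ for the orientation-agreeing product (and that the other product has strictly smaller length). Concretely this amounts to showing that the concatenation $[p,p\cdot gh]\cup[p\cdot gh,p\cdot(gh)^2]\cup\cdots$ is geodesic, which reduces to checking that at each junction point the incoming and outgoing segments leave in distinct directions. That is where agreement of orientations on $I$ is actually used, and where the argument must branch into subcases according as $\len(I)$ is larger or smaller than $\ell(g)$ and than $\ell(h)$ (and according to whether $I$ is bounded). Enumerating the (at most four) directions at an endpoint of $I$ and keeping track of how $g$, $h$, and their inverses permute them is the delicate bookkeeping; once that is in hand, the remainder is triangle inequalities and repeated applications of Lemma~\ref{lem:cm-axes}.
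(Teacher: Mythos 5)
Your plan is sound and follows essentially the same route as the source: the paper states this lemma as a quotation from Culler--Morgan without reproving it, but the explicit Culler--Morgan fundamental domains it records immediately afterward (e.g.\ $[q\cdot g^{-1},q,q\cdot h]$ for $gh$ and $[q\cdot g^{-1},q\cdot h^{-1}]$ for $gh^{-1}$, based at the terminal point $q$ of the oriented overlap) are precisely the basepoint computation and junction-direction check you describe. One cosmetic slip: the disjoint case, dispatched via Lemma~\ref{lem:cm-lf-disjoint}, yields the contrapositive of the \emph{if} direction (max equal to sum implies the axes meet) rather than of the \emph{only if} direction, but since your two cases are exhaustive the biconditional is covered either way.
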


These two lemmas are proved by the construction of explicit fundamental
domains. These fundamental domains are sufficiently useful that we detail them
here. That these domains have the claimed properties is a consequence of the proofs of
the previous two lemmas.

\begin{definition}
	Let $T$ be a $G$-tree and suppose $g$ and $h$ are such that $C_g\cap
	C_h = \emptyset$. Let $\alpha = [p,q]$ be the geodesic joining $C_g$ to
	$C_h$. The \emph{Culler-Morgan fundamental domain} for the action of
	$gh$ on $C_{gh}$ is the geodesic 
		\[ [p\cdot g^{-1},p,q,q\cdot h,p\cdot h]. \]
\end{definition}

\begin{definition}
	Let $T$ be a $G$-tree and suppose $g$ and $h$ are such that $C_g\cap
	C_h \neq \emptyset$, at least one of $g$ and $h$ is hyperbolic, and
	that if both $g$ and $h$ are hyperbolic the orientations agree. Let
	$\alpha =[p,q]$ be the possibly degenerate ($p=q$) common arc of
	intersection with the induced orientation. The \emph{Culler-Morgan
	fundamental domain} for the action of $gh$ on $C_{gh}$ is the geodesic
	
		\[ [q\cdot g^{-1},q,q\cdot h]. \]
	If $gh^{-1}$ is also hyperbolic, then the \emph{Culler-Morgan
	fundamental domain} for the action of $gh^{-1}$ on $C_{gh^{-1}}$ is the
	geodesic 
		\[ [q\cdot g^{-1},q\cdot h^{-1} ].\]
\end{definition}

The axes of a minimal $G$-tree provide complete information about the $G$-tree.

\begin{proposition}[\cite{culler-morgan}*{Proposition 3.1}]
	\label{prop:irreducible-union}
	A minimal non-trivial $G$-tree $T$ is equal to the union of the axes of
	the hyperbolic elements.
\end{proposition}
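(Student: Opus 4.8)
The plan is to prove the two inclusions separately, the nontrivial one being that every point of $T$ lies on the axis of some hyperbolic element. First I would dispose of the easy direction: each axis $C_g$ of a hyperbolic $g$ is a $G$-invariant... no, it is only $g$-invariant, but the union $\bigcup_{g \text{ hyperbolic}} C_g$ is $G$-invariant, since conjugation sends $C_g$ to $C_{h^{-1}gh}$ and hyperbolicity is a conjugacy invariant. Moreover this union is a subtree: it is connected because $T$ is connected and any two axes can be joined by a geodesic, which I would argue lies in a third axis (see below). Granting that the union $U = \bigcup C_g$ is a nonempty $G$-invariant subtree, minimality of $T$ forces $U = T$, which is the Proposition. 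So the real content is showing $U$ is nonempty and connected, i.e.\ that $T$ contains at least one hyperbolic element and that the geodesic between two axes is itself covered by axes.

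For nonemptiness I would invoke that $T$ is non-trivial: by definition there is some $g \in G$ with $\Fix(g) = \emptyset$, and by Lemma~\ref{lem:cm-axes} (first item, contrapositive) such a $g$ is hyperbolic, so $U \neq \emptyset$. For connectedness, the key step is: given hyperbolic $g, h$ with $C_g \cap C_h = \emptyset$, the bridge $[p,q]$ joining them is contained in $C_{gh}$. This is exactly what the Culler–Morgan fundamental domain $[p\cdot g^{-1}, p, q, q\cdot h, p\cdot h]$ for the action of $gh$ on $C_{gh}$ tells us: it shows $[p,q]$ is a sub-arc of the axis $C_{gh}$, provided $gh$ is hyperbolic. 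That $gh$ is indeed hyperbolic here follows from Lemma~\ref{lem:cm-lf-disjoint}, which gives $\ell(gh) = \ell(g) + \ell(h) + 2d(C_g,C_h) > 0$. So any two disjoint axes are joined through a third axis, and iterating this shows $U$ is path-connected, hence connected; since $T$ is a real tree, $U$ is a subtree.

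There is one gap to fill: a priori $U$ might be nonempty but all of its points might lie in axes that pairwise intersect, or $G$ might act with every hyperbolic element sharing a common axis — but that is precisely the lineal/reducible situation excluded only later, and here we only need $T$ non-trivial, so the above argument using a single hyperbolic $g$ together with all its $G$-translates $C_{h^{-1}gh}$ already furnishes an invariant subtree. Indeed if some $C_{h^{-1}gh}$ is disjoint from $C_g$ we bridge as above; if they all pairwise intersect, their union is still a subtree directly. The main obstacle I anticipate is the bookkeeping in verifying that the Culler–Morgan domain genuinely contains the connecting arc with the correct orientation data, but the excerpt explicitly licenses us to take the stated properties of these fundamental domains as given, so this reduces to citing Lemmas~\ref{lem:cm-lf-disjoint} and~\ref{lem:cm-axes}. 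Finally, I would remark that $T$ non-trivial is used twice — once for nonemptiness and implicitly to guarantee $d(C_g, C_h)$ can be positive — and that minimality is used only at the very end to upgrade the invariant subtree $U$ to all of $T$.
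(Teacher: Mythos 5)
The paper itself gives no proof of Proposition~\ref{prop:irreducible-union} --- it is quoted directly from Culler--Morgan --- and your argument is precisely the one in that source: the union $U$ of axes is nonempty (non-triviality plus the first item of Lemma~\ref{lem:cm-axes} forces some element to be hyperbolic), $G$-invariant, and connected because the bridge between two disjoint axes lies on the axis of the product, which is hyperbolic by Lemma~\ref{lem:cm-lf-disjoint}; minimality then finishes. The one wrinkle is that this paper's definition of subtree requires completeness, while you only establish that $U$ is convex and invariant, so under that definition minimality applied to $\overline{U}$ literally yields only that $U$ is dense --- you should either invoke minimality with respect to Culler--Morgan's weaker notion of subtree (a nonempty convex subset), which is what the cited source does, or add a word on why the union of axes is closed.
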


\subsection{Axioms}

For irreducible $G$-trees, length functions provide a complete invariant, as
noted above. Culler and Morgan characterized these length functions in terms of
a list of useful properties; Parry showed that any length function satisfying
these axioms comes from an irreducible $G$-tree~\cite{parry}.

\begin{definition}
	An \emph{axiomatic length function} (or just \emph{length function}) is
	a function $\ell: G\to \mathbb{R}_{\geq 0}$ satisfying the following
	six axioms.
	\begin{enumerate}
		\item $\ell(\id) = 0$.
		\item For all $g\in G$, $\ell(g) = \ell(g^{-1})$.
		\item For all $g,h\in G$, $\ell(g) = \ell(hgh^{-1})$.
		\item For all $g,h\in G$, either 
			\begin{gather*}\ell(gh) = \ell(gh^{-1}) \\
				\mbox{or} \\
				\max\{\ell(gh),\ell(gh^{-1})\} \leq \ell(g)
				+\ell(h).
			\end{gather*}
		\item For all $g,h\in G$ such that $\ell(g) > 0$ and $\ell(h) > 0$,
			either
			\begin{gather*}
				\ell(gh) = \ell(gh^{-1}) > \ell(g)+\ell(h) \\
				\mbox{or}\\
				\max\{\ell(gh),\ell(gh^{-1})\} = \ell(g)+\ell(h).
			\end{gather*}
		\item There exists a pair $g,h\in G$ such that
			\[ 0 < \ell(g) + \ell(h) - \ell(gh^{-1}) < 2\min\{\ell(g),\ell(h)\}. \]
	\end{enumerate}
\end{definition}

\begin{proposition}[\cite{culler-morgan}]
	If $\ell_T$ is the translation length function of an irreducible
	$G$-tree then $\ell_T$ is an axiomatic length function.
\end{proposition}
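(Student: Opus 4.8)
The plan is to verify the six axioms in turn, using only that $G$ acts by isometries (so $d_T$ is $G$-invariant) together with the lemmas of Culler and Morgan recalled above. Axioms~(1)--(3) are formal: $\ell_T(\id)=\inf_p d_T(p,p)=0$; reindexing the infimum by $p\mapsto p\cdot g$ shows $\ell_T(g^{-1})=\inf_p d_T(p,p\cdot g^{-1})=\inf_p d_T(p\cdot g,p)=\ell_T(g)$; and, since $x\mapsto x\cdot h$ is an isometry, $\ell_T(hgh^{-1})=\inf_p d_T(p,p\cdot hgh^{-1})=\inf_p d_T(p\cdot h,p\cdot hg)=\ell_T(g)$.

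For Axiom~(4) I would split on whether $C_g\cap C_h=\emptyset$. If it is, Lemma~\ref{lem:cm-lf-disjoint} gives $\ell_T(gh)=\ell_T(gh^{-1})$, the first alternative. If instead some $x\in C_g\cap C_h$ is chosen, then $d_T(x,x\cdot gh)\le d_T(x,x\cdot g)+d_T(x\cdot g,x\cdot gh)=\ell_T(g)+\ell_T(h)$, and the same bound holds for $d_T(x,x\cdot gh^{-1})$, so $\max\{\ell_T(gh),\ell_T(gh^{-1})\}\le\ell_T(g)+\ell_T(h)$. Axiom~(5), for $g,h$ both hyperbolic, splits the same way: if $C_g\cap C_h=\emptyset$ then Lemma~\ref{lem:cm-lf-disjoint} gives $\ell_T(gh)=\ell_T(gh^{-1})=\ell_T(g)+\ell_T(h)+2d_T(C_g,C_h)>\ell_T(g)+\ell_T(h)$, using that two disjoint closed subtrees of a real tree are at positive distance; and if $C_g\cap C_h\ne\emptyset$ then Lemma~\ref{lem:cm-lf-isect} gives $\max\{\ell_T(gh),\ell_T(gh^{-1})\}=\ell_T(g)+\ell_T(h)$ directly.

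The substantive point is Axiom~(6), where irreducibility is needed. The first ingredient is a structural fact: an irreducible action contains two hyperbolic elements $g$ and $h$ with $C_g\cap C_h=\emptyset$. This is part of the Culler--Morgan classification~\cite{culler-morgan}; it can also be extracted from the results above, since if every pair of axes met then a Helly-type argument for trees would show that the family of all axes has a common point or a common end, and then $T=\bigcup_g C_g$ (Proposition~\ref{prop:irreducible-union}) together with minimality would force a $G$-invariant line or a $G$-fixed end, i.e.\ a lineal or reducible action. Granting such $g$ and $h$, set $g_0=gh$ and $h_0=gh^{-1}$. By Lemma~\ref{lem:cm-lf-disjoint} both are hyperbolic with
\[
\ell_T(g_0)=\ell_T(h_0)=\ell_T(g)+\ell_T(h)+2d_T(C_g,C_h),
\]
while conjugation-invariance (Axiom~(3)) and $\ell_T(h^2)=2\ell_T(h)$ give
\[
\ell_T(g_0h_0^{-1})=\ell_T\big((gh)(hg^{-1})\big)=\ell_T\big(gh^2g^{-1}\big)=\ell_T(h^2)=2\ell_T(h).
\]
Hence
\[
\ell_T(g_0)+\ell_T(h_0)-\ell_T(g_0h_0^{-1})=2\ell_T(g)+4d_T(C_g,C_h),
\]
which is positive and strictly less than $2\ell_T(g)+2\ell_T(h)+4d_T(C_g,C_h)=2\min\{\ell_T(g_0),\ell_T(h_0)\}$ because $\ell_T(g)>0$ and $\ell_T(h)>0$. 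Thus $(g_0,h_0)$ witnesses Axiom~(6).

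I expect the only real obstacle to be the production of the disjoint pair of axes: once two independent hyperbolic isometries are in hand, Axiom~(6) is the short computation above, Axioms~(1)--(3) are formal, and Axioms~(4)--(5) are immediate from Lemmas~\ref{lem:cm-lf-disjoint} and~\ref{lem:cm-lf-isect}. The existence of two hyperbolics with disjoint axes is, however, the geometric heart of irreducibility, and a careful proof of it runs through the classification of minimal $G$-trees.
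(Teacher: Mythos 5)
The paper does not prove this proposition; it is quoted directly from Culler--Morgan, so there is no internal argument to compare against. Your proof is essentially the standard one and is correct in outline, with two points worth flagging. First, a small slip in Axiom~(4): with the paper's right-action convention, $x\cdot gh = (x\cdot g)\cdot h$, so $d_T(x\cdot g, x\cdot gh) = d_T(x\cdot g,(x\cdot g)\cdot h)$, which equals $\ell_T(h)$ only if $x\cdot g\in C_h$ --- not guaranteed even when $x\in C_g\cap C_h$. Route the triangle inequality through $x\cdot h$ instead: $d_T(x,x\cdot gh)\le d_T(x,x\cdot h)+d_T(x\cdot h,(x\cdot g)\cdot h)=\ell_T(h)+d_T(x,x\cdot g)=\ell_T(h)+\ell_T(g)$. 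Second, your Axiom~(6) computation is clean and correct ($g_0h_0^{-1}=gh^2g^{-1}$, hence $\ell_T(g_0)+\ell_T(h_0)-\ell_T(g_0h_0^{-1})=2\ell_T(g)+4d_T(C_g,C_h)$, strictly between $0$ and $2\min\{\ell_T(g_0),\ell_T(h_0)\}$), and it is in fact exactly the disjoint-axes case of the paper's Lemma~\ref{lem:good-pair}, whose Figure~\ref{fig:gp-disjoint} shows the overlap $C_{gh}\cap C_{gh^{-1}}$ you are implicitly measuring. You are right that the real content is the existence of two hyperbolic elements with disjoint axes; your ``Helly-type'' sketch is not a proof (pairwise intersection of infinitely many closed subtrees does not formally yield a common point or end without a compactness or boundedness argument), but since you explicitly defer that step to the Culler--Morgan classification --- the very source the proposition is attributed to --- this is an acceptable division of labor rather than a gap. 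A self-contained route, if you want one, is to take hyperbolic $g,h$ with $C_g\ne C_h$ and $\partial_Tg\cap\partial_Th=\emptyset$ (available by irreducibility) and replace $h$ by $g^{-n}hg^n$ for large $n$, whose axis $C_h\cdot g^n$ is eventually disjoint from $C_g$.
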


\begin{theorem}[\cite{parry}]
	If $\ell$ is an axiomatic length function on a group $G$ then there is
	an irreducible $G$-tree $T$ such that $\ell = \ell_T$.
\end{theorem}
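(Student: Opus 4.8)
The plan is to reduce the theorem to Chiswell's realization theorem for based length functions~\cites{chiswell,lyndon}: from any function $L\colon G\to\mathbb{R}_{\geq 0}$ with $L(\id)=0$, $L(g)=L(g^{-1})$, and the four-point inequality $(g\mid h)\geq\min\{(g\mid k),(h\mid k)\}$ on the Gromov products $(g\mid h)=\tfrac12\big(L(g)+L(h)-L(gh^{-1})\big)$, Chiswell constructs an $\mathbb{R}$-tree with a basepoint $p$ and a right $G$-action by isometries satisfying $d(p,p\cdot g)=L(g)$. So the substance of the argument is to manufacture such an $L$ \emph{purely from the conjugation-invariant data $\ell$}, in such a way that the translation length function $g\mapsto\lim_n L(g^n)/n$ of the resulting action is exactly $\ell$.

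To build $L$ I would first use Axiom (6) to fix a ``basepoint formula.'' Axiom (6) supplies $a,b\in G$ with $\ell(a),\ell(b)>0$ and $0<\ell(a)+\ell(b)-\ell(ab^{-1})<2\min\{\ell(a),\ell(b)\}$; unwinding Axioms (4) and (5) this forces $\ell(ab)=\ell(a)+\ell(b)>\ell(ab^{-1})$, and Lemma~\ref{lem:cm-lf-isect} then says that in any realizing tree the axes $C_a$ and $C_b$ overlap in a non-degenerate bounded segment along which their orientations agree and then separate, so that this ``crossing region'' singles out a distinguished point $p$. Fixing such a pair $a,b$ once and for all, I would write down a closed formula $L(g)=F_{a,b}(\ell;g)$ for the would-be based length $d(p,p\cdot g)$, obtained by combining the Culler--Morgan fundamental-domain computations (Lemmas~\ref{lem:cm-axes}, \ref{lem:cm-lf-disjoint}, \ref{lem:cm-lf-isect}), which express the distance from a point lying on a known axis to the characteristic set of an arbitrary element in terms of the translation lengths of short words in $g$, $a$, and $b$.

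The main obstacle is verifying that the function $L$ so defined is single-valued and satisfies the four-point inequality, i.e.\ that it is a genuine (Lyndon) based length function. I expect this to require a substantial case analysis, organized according to the mutual positions --- disjoint, meeting in a point, or overlapping in a segment with agreeing or opposite orientations --- of the relevant characteristic sets, and driven entirely by Axioms (4) and (5), which are precisely the combinatorial shadow in $\ell$ of the tree-geometric facts one would invoke if the tree already existed. Granting this, Axioms (1)--(3) give $L(\id)=0$, symmetry, and (after a short direct computation with the formula for $L(g^n)$, using $\ell(g^n)=n\ell(g)$ from Axiom (5)) the identity $\lim_n L(g^n)/n=\ell(g)$. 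Applying Chiswell's construction, passing to the metric completion so that the tree is a real tree in the sense of Definition~\ref{def:real-tree}, and then restricting to a minimal $G$-invariant subtree $T_0$ (cf.\ Proposition~\ref{prop:irreducible-union}), we obtain a minimal $G$-tree with $\ell_{T_0}=\ell$.

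Finally I would check that $T_0$ is irreducible. It is non-trivial because $\ell\not\equiv 0$ by Axiom (6), so $a$ has no fixed point. It is neither lineal nor reducible: by Lemma~\ref{lem:cm-lf-isect} and Axiom (6) the elements $a,b$ act hyperbolically on $T_0$ with distinct axes meeting in a non-degenerate bounded segment, which is impossible if $G$ preserves a line --- then $C_a=C_b$ --- or fixes an end of $T_0$ --- then $C_a\cap C_b$ would contain a ray. Hence $T_0$ is an irreducible $G$-tree with $\ell_{T_0}=\ell$, as required.
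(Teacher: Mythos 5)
The paper does not prove this statement at all; it is quoted directly from Parry, and the realization of axiomatic length functions by irreducible $G$-trees is precisely the content of Parry's article. Your outline in fact follows the same route Parry takes: extract a good pair $(a,b)$ from Axiom VI, use it to write down a candidate based length function $L$ purely in terms of $\ell$, verify the Lyndon--Chiswell axioms for $L$, apply Chiswell's realization theorem, and pass to a minimal invariant subtree. The peripheral steps you describe are sound: Axioms IV--VI do force $\ell(ab)=\ell(a)+\ell(b)>\ell(ab^{-1})$ for a good pair, so by Lemma~\ref{lem:cm-lf-isect} the axes of $a$ and $b$ in any realizing tree would overlap coherently in a bounded segment and determine a basepoint; the natural closed formula for $L$ is the Culler--Morgan expression~\ref{eqn:good-basepoint} rewritten via $d(C_x,C_z)=\tfrac12\max\{0,\ell(xz)-\ell(x)-\ell(z)\}$, which is exactly how the present paper manipulates based length functions in Section~\ref{sec:based-lfs}; and your concluding irreducibility check (non-trivial by Axiom VI, not lineal or reducible because $C_a$ and $C_b$ are distinct axes meeting in a bounded non-degenerate arc) is correct.

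The gap is that essentially the entire content of the theorem sits inside the one step you explicitly defer: proving, from Axioms I--V alone, that the function $F_{a,b}(\ell;\cdot)$ is well defined and that its Gromov products satisfy the four-point inequality required by Chiswell's construction. That case analysis, organized by the mutual positions of the would-be characteristic sets, is the proof; nothing in your outline indicates how it closes, e.g.\ why the maxima over the nine axis--translate pairs in~\ref{eqn:good-basepoint} interact coherently when one compares three arbitrary group elements. Two smaller points also need care. First, the identity $\lim_n L(g^n)/n=\ell(g)$ requires $\ell(g^n)=n\,\ell(g)$ for hyperbolic $g$, which is a consequence of the axioms but not an axiom itself, and for elliptic $g$ you must separately show that $L(g^n)$ stays bounded so that $g$ really acts elliptically on the constructed tree. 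Second, minimal invariant subtrees exist here because Axiom VI guarantees hyperbolic elements (so Proposition~\ref{prop:irreducible-union} applies); it is worth saying so, since $G$ is not assumed finitely generated in this statement. In short: this is the right strategy and the same one as the cited source, but as written it is a plan for a proof rather than a proof.
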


In the wider literature, axiom VI is omitted, including the consideration of
all $G$-trees, instead of only irreducible $G$ trees. Without this axiom,
length functions are no longer a complete isometry invariant. A pair of elements witnessing Axiom VI for a given length function $\ell$ is
called a \emph{good pair} for $\ell$.

\subsection{Good pairs}

Culler and Morgan used good pairs in the proof of their uniqueness statement
for $G$-trees coming from a given length function. They give a geometric
definition.

\begin{definition}
	Let $T$ be a $G$-tree. A pair of elements $g,h\in G$ is a \emph{good
	pair} for $T$ if
	\begin{itemize}
		\item the elements $g$ and $h$ are hyperbolic;
		\item the axes $C_g$ and $C_h$ meet in an arc of positive
			length;
		\item the orientations of $C_g$ and $C_h$ agree on the
			intersection;
		\item $\len(C_g\cap C_h) < \min\{\ell(g),\ell(h)\}.$
	\end{itemize}
\end{definition}

\begin{proposition}[\cite{culler-morgan}*{Lemma 3.6}]
	A pair of elements $g,h\in G$ is a good pair for a $G$-tree $T$ if and
	only if $g$ and $h$ witness Axiom VI for $\ell_T$.
\end{proposition}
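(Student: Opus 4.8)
The plan is to prove the equivalence directly from the two geometric facts relating intersection of axes to the length function, namely Lemma~\ref{lem:cm-lf-isect} and Lemma~\ref{lem:cm-axes}, so that one never needs to re-examine Culler and Morgan's fundamental-domain arguments. First I would unpack both sides. For a good pair $g,h$ the four bulleted conditions say: $g,h$ are hyperbolic, $C_g \cap C_h$ is an arc of positive length, the orientations agree there, and $\ell_1 := \len(C_g\cap C_h) < \min\{\ell_T(g),\ell_T(h)\}$. For the Axiom VI condition we must produce the inequality $0 < \ell_T(g) + \ell_T(h) - \ell_T(gh^{-1}) < 2\min\{\ell_T(g),\ell_T(h)\}$.

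Next I would compute $\ell_T(gh^{-1})$ in terms of $\ell_1$. Since $C_g$ and $C_h$ meet in an arc with \emph{agreeing} orientations, Lemma~\ref{lem:cm-lf-isect} gives $\ell_T(gh) > \ell_T(gh^{-1})$ and $\max\{\ell_T(gh),\ell_T(gh^{-1})\} = \ell_T(g)+\ell_T(h)$, hence $\ell_T(gh) = \ell_T(g)+\ell_T(h)$. To pin down $\ell_T(gh^{-1})$ I would use the Culler-Morgan fundamental domain for $gh^{-1}$ on $C_{gh^{-1}}$ in the case $C_g\cap C_h\neq\emptyset$ with agreeing orientations: taking $\alpha=[p,q]$ the common oriented arc (so $d(p,q)=\ell_1$), the fundamental domain is $[q\cdot g^{-1}, q\cdot h^{-1}]$, provided $gh^{-1}$ is hyperbolic. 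Its length is the translation length. A short computation with the orientation conventions — $q\cdot g^{-1}$ lies at distance $\ell_T(g)$ from $q$ on the negative side of $C_g$, $q\cdot h^{-1}$ at distance $\ell_T(h)$ on the negative side of $C_h$, and these two negative rays share exactly the arc of length $\ell_1$ emanating backward along the common intersection — yields $\len([q\cdot g^{-1}, q\cdot h^{-1}]) = \ell_T(g) + \ell_T(h) - 2\ell_1$. So $\ell_T(gh^{-1}) = \ell_T(g)+\ell_T(h)-2\ell_1$, and therefore $\ell_T(g)+\ell_T(h)-\ell_T(gh^{-1}) = 2\ell_1$. Now the good-pair inequality $0 < \ell_1 < \min\{\ell_T(g),\ell_T(h)\}$ translates, after multiplying by two, into exactly Axiom VI: $0 < 2\ell_1 < 2\min\{\ell_T(g),\ell_T(h)\}$. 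This gives one direction.

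For the converse, suppose $g,h$ witness Axiom VI for $\ell_T$, i.e. $0 < \ell_T(g)+\ell_T(h)-\ell_T(gh^{-1}) < 2\min\{\ell_T(g),\ell_T(h)\}$. The strict inequality $\ell_T(gh^{-1}) < \ell_T(g)+\ell_T(h)$ already forces $\ell_T(g)>0$ and $\ell_T(h)>0$ (if one were elliptic, Lemma~\ref{lem:cm-axes} together with Lemma~\ref{lem:cm-lf-disjoint} would give $\ell_T(gh^{-1}) \geq \ell_T(g)+\ell_T(h)$ when characteristic sets are disjoint, and $= \ell_T(g)+\ell_T(h)$ when they meet, since $\ell_T(gh)=\ell_T(gh^{-1})$ whenever the non-hyperbolic case of axiom IV applies — I would make this dichotomy precise). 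So both $g$ and $h$ are hyperbolic. Then Lemma~\ref{lem:cm-lf-isect} is available: since $\ell_T(gh^{-1}) < \ell_T(g)+\ell_T(h)$ and $\ell_T(g),\ell_T(h)>0$, axiom V forces $\max\{\ell_T(gh),\ell_T(gh^{-1})\} = \ell_T(g)+\ell_T(h)$, so $C_g\cap C_h\neq\emptyset$, and since $\ell_T(gh) > \ell_T(gh^{-1})$ the ``moreover'' clause of Lemma~\ref{lem:cm-lf-isect} gives that $C_g\cap C_h$ contains an arc and the orientations agree on it. Setting $\ell_1 = \len(C_g\cap C_h) > 0$ and running the fundamental-domain computation of the previous paragraph backward gives $\ell_T(g)+\ell_T(h)-\ell_T(gh^{-1}) = 2\ell_1$, so the upper bound in Axiom VI reads $2\ell_1 < 2\min\{\ell_T(g),\ell_T(h)\}$, i.e. $\ell_1 < \min\{\ell_T(g),\ell_T(h)\}$. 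All four bulleted conditions for a good pair now hold.

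The main obstacle I anticipate is bookkeeping with orientations in the fundamental-domain length computation $\len([q\cdot g^{-1}, q\cdot h^{-1}]) = \ell_T(g)+\ell_T(h)-2\ell_1$: one must be careful that $q\cdot g^{-1}$ and $q\cdot h^{-1}$ lie on the ``negative'' sides of the respective axes relative to the shared arc, that the geodesic between them passes back through $C_g\cap C_h$, and that the overlap of the two backward rays is precisely $\ell_1$ rather than something larger — this uses that $C_g\cap C_h$ is \emph{exactly} the common arc (two distinct lines in a real tree meet in a single, possibly degenerate, subtree). A secondary subtlety is confirming $gh^{-1}$ is hyperbolic so that the fundamental-domain description applies at all; but this is immediate since $0<\ell_T(gh^{-1})$ would follow from the computation, or one can observe directly that with agreeing orientations and small overlap the product $gh^{-1}$ cannot be elliptic. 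Once the orientation conventions from the Culler-Morgan fundamental domains are carefully aligned, both directions reduce to the single identity $\ell_T(g)+\ell_T(h)-\ell_T(gh^{-1}) = 2\len(C_g\cap C_h)$, valid exactly when $g,h$ are hyperbolic with agreeing orientations on a nonempty overlap, and the proposition follows.
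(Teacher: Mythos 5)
The paper gives no proof of this proposition (it is quoted from Culler--Morgan, Lemma 3.6), so there is nothing internal to compare against; your strategy of reducing both directions to a formula for the overlap defect $\ell_T(g)+\ell_T(h)-\ell_T(gh^{-1})$ is the standard one. The forward direction is fine. The converse, however, has a genuine gap: the identity $\ell_T(g)+\ell_T(h)-\ell_T(gh^{-1}) = 2\len(C_g\cap C_h)$ is \emph{not} ``valid exactly when $g,h$ are hyperbolic with agreeing orientations on a nonempty overlap.'' Your own derivation of it shows why: it needs $q\cdot g^{-1}$ and $q\cdot h^{-1}$ to lie beyond $p$ on the two negative rays, which requires $\len(C_g\cap C_h)\leq\min\{\ell_T(g),\ell_T(h)\}$ (and in particular a bounded overlap). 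If, say, $\ell_T(h)\leq\len(C_g\cap C_h)$, then $q\cdot h^{-1}$ lands inside the common arc, the geodesic $[q\cdot g^{-1},q\cdot h^{-1}]$ has length $\ell_T(g)-\ell_T(h)$, and the defect is $2\ell_T(h)=2\min\{\ell_T(g),\ell_T(h)\}$, not $2\len(C_g\cap C_h)$. Since the converse direction is precisely trying to establish $\len(C_g\cap C_h)<\min\{\ell_T(g),\ell_T(h)\}$, invoking the identity there is circular: you are using a formula whose hypothesis is the conclusion you want.

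The fix is short but must be made explicit. The correct general statement, which your distance computation actually yields if carried out in all regimes, is
\[
\ell_T(g)+\ell_T(h)-\ell_T(gh^{-1}) \;=\; 2\min\bigl\{\len(C_g\cap C_h),\,\ell_T(g),\,\ell_T(h)\bigr\}
\]
for hyperbolic $g,h$ with coherently oriented nondegenerate overlap (with the convention that an unbounded overlap is treated as $\len = \infty$, and allowing $gh^{-1}$ elliptic, e.g.\ when $C_g=C_h$ and $\ell_T(g)=\ell_T(h)$). With this version, Axiom VI's strict upper bound $\,<2\min\{\ell_T(g),\ell_T(h)\}$ immediately rules out $\len(C_g\cap C_h)\geq\min\{\ell_T(g),\ell_T(h)\}$ and the unbounded case simultaneously, and the converse closes. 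Alternatively, keep your special-case identity but insert a separate step showing that a large or infinite overlap forces the defect to equal $2\min\{\ell_T(g),\ell_T(h)\}$, contradicting Axiom VI. As written, the phrase ``running the fundamental-domain computation backward'' silently assumes what is to be proved.
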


\begin{lemma}
	\label{lem:cm-good-pair}
	Suppose $g,h \in G$ is a pair of hyperbolic elements of a $G$-tree $T$ whose
	axes intersect in an arc of finite length and the induced orientations
	agree. Then there are integers $A, B > 0$ so that for all $a\geq A$ and
	$b\geq B$, $g^a, h^b$ is a good pair.
\end{lemma}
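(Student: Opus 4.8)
The plan is to compute the axis $C_{g^a h^b}$ directly and show that for $a,b$ large the overlap $C_{g^a}\cap C_{h^b}$—which equals $C_g\cap C_h$ regardless of $a,b$—becomes a strictly smaller fraction of the translation lengths $\ell(g^a)=a\,\ell(g)$ and $\ell(h^b)=b\,\ell(h)$. Write $\alpha=C_g\cap C_h=[p,q]$ with the common orientation, so $\len(\alpha)=:L<\infty$; since $g$ and $h$ are hyperbolic with $C_g,C_h$ isometric to $\mathbb{R}$ and orientations agreeing on $\alpha$, the powers satisfy $C_{g^a}=C_g$ and $C_{h^b}=C_h$ with the same orientations, so $C_{g^a}\cap C_{h^b}=\alpha$ for every $a,b\geq 1$. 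The first step is to record that $g^a,h^b$ are still hyperbolic with $\ell(g^a)=a\ell(g)$, $\ell(h^b)=b\ell(h)$ (Lemma~\ref{lem:cm-axes}, third item, applied iteratively, or directly from the translation action on the line $C_g$), and that the hypothesis of the second Culler--Morgan fundamental domain construction is met by the pair $(g^a,h^b)$.

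Next I would use that fundamental-domain description to pin down $\len(C_{g^a}\cap C_{h^b})$ relative to $\ell(g^a h^b)$. The key point is that the overlap arc $\alpha$ is a fixed bounded segment independent of the exponents, while the two ``teeth'' sticking out—the segment of $C_{g^a}$ of length $\ell(g^a)$ before $q\cdot g^{-a}$ and the segment of $C_{h^b}$ of length $\ell(h^b)$ after $q\cdot h^b$—grow linearly in $a$ and $b$. Concretely, applying Lemma~\ref{lem:cm-axes} to the concatenated geodesic $[q\cdot g^{-a}, q, q\cdot h^b]$, one sees that $C_{g^a h^b}$ contains $\alpha$ and that the overlap of $C_{g^a}$ with $C_{h^b}$ remains exactly $\len(\alpha)=L$ for all $a,b$ (the overlap is a geometric feature of the pair of lines $C_g, C_h$, not of the product). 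Then the ``good pair'' inequality $\len(C_{g^a}\cap C_{h^b}) < \min\{\ell(g^a),\ell(h^b)\}$ becomes simply $L < \min\{a\,\ell(g),\, b\,\ell(h)\}$, and setting $A=\lfloor L/\ell(g)\rfloor + 1$ and $B=\lfloor L/\ell(h)\rfloor + 1$ makes this hold for all $a\geq A$, $b\geq B$. The other three bullet points of the good pair definition (hyperbolicity, positive-length overlap, agreeing orientations) are inherited immediately from the corresponding properties of $g,h$.

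The main obstacle—and the only place any care is really needed—is the claim that the overlap $C_{g^a}\cap C_{h^b}$ does not change with the exponents: one might worry that taking powers could somehow extend or shrink the intersection region, or that $C_{g^a h^b}$ could behave badly if $\alpha$ degenerates. But since $\alpha$ has positive length by hypothesis and $g$ acts on $C_g=C_{g^a}$ as translation by $\ell(g)>0$ (hence $g^a$ as translation by $a\ell(g)>0$), and likewise for $h$, the lines and their orientations are genuinely unchanged, so $C_{g^a}\cap C_{h^b}=C_g\cap C_h=\alpha$. Once that is observed, the lemma is just the Archimedean property of $\mathbb{R}$. I would therefore spend most of the write-up carefully justifying $C_{g^a}=C_g$ with matching orientation (a one-line consequence of Lemma~\ref{lem:cm-axes}) and then state the choice of $A,B$.
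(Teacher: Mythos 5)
Your argument is correct and follows essentially the same route as the paper: observe that $C_{g^a}=C_g$ and $C_{h^b}=C_h$ with unchanged orientations while $\ell(g^a)=a\ell(g)$ and $\ell(h^b)=b\ell(h)$ grow, so the fixed overlap length $L$ is eventually beaten. In fact your choice $A=\lfloor L/\ell(g)\rfloor+1$ is slightly more careful than the paper's $\lceil L/\ell(g)\rceil$ (which gives only $A\ell(g)\geq L$ when $L$ is an exact multiple of $\ell(g)$, whereas the good-pair definition requires strict inequality); the digression through the Culler--Morgan fundamental domain for $g^ah^b$ is unnecessary but harmless.
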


\begin{proof}
	By hypothesis $g$ and $h$ satisfy the first three points of the
	geometric definition of a good pair. Let $N = \len(C_g\cap C_h)$.
	It is immediate that $A = \lceil N/\ell_T(g) \rceil$ and $B= \lceil
	N/\ell_T(h) \rceil$ are the desired integers.
\end{proof}

The axes of a pair of group elements satisfying the hypotheses of
Lemma~\ref{lem:cm-good-pair} have distinct boundary points; this is a form of
independence seen by the tree, and closely related to the algebraic
independence of group elements in the subgroup generated by a good
pair~\cite{culler-morgan}*{Lemma 2.6}. In the sequel we only need to reference
this boundary independence.

\begin{definition}
	Let $T$ be a $G$-tree. Two hyperbolic elements $g, h\in G$ are
	\emph{$T$-independent} when
	\[ \partial_T g \cap \partial_T h  = \emptyset. \]
\end{definition}
 
\section{Tree ends and length function combinatorics}
\label{sec:tree-ends}

As in the introduction, consider a measured geodesic lamination $\lambda$ of a closed hyperbolic
surface $\Sigma$. Lifting $\lambda$ to the universal cover $\mathbb{H}^2$ gives
a dual $\piS$-tree $T$~\cite{surface-dual-tree}. Corresponding to an oriented
arc $e\subseteq T$ there is a subset of the boundary of $\mathbb{H}^2$. For
each point of $e$ coming from a leaf $\gamma\subseteq \lambda$, $t(e)$
determines a side of $\gamma$ in $\mathbb{H}^2$, and so picks a connected
component of $\mathbb{H}^2\setminus\gamma$. The intersection of the boundaries
of these connected components is the subset of the boundary corresponding to
$e$, as in~Figure~\ref{fig:ebox}. Endpoints of axes of the $\piS$ action on
$\mathbb{H}^2$ are dense in the boundary so this subset can be described
entirely in terms of the group. 

\begin{figure}
	\begin{center}
		\begin{tikzpicture}[scale=3,font=\small,decoration={
    markings,
    mark=at position 0.5 with {\arrow{>}}}
]
	\draw ([shift={(85:1.1)}]0,0) -- (85:1.15) arc (85:-80:1.15) node
	[midway, right, align=center] {seen\\ from $e$} -- (-80:1.1);
	\node at (-.35,0) {$e$} ;
	\draw [postaction={decorate}] (-.3,0) -- (.1,0);
	\setuphypplane
	\lamination{red}{-21/-30,-18/-45,-15/-75,25/50,27/40,-80/85,-80/90,-80/95,-110/95,-105/95,120/150,120/140,130/140,-130/-170,-135/-170,-140/-155}
	\drawBoundaryCircle
\end{tikzpicture}
 		\caption{The part of the boundary ``seen'' from an arc $e$ in the tree dual
		to a lamination.}
		\label{fig:ebox}
	\end{center}
\end{figure}

The description of this subset in terms of the group generalizes to $G$-trees.
Note that for each $p\in e^\circ$, the interior of $e$, the orientation of $e$ picks a unique
direction $\delta_p^e$ based at $p$ such that $t(e)\in \delta_p^e$. The subset
of the boundary of the tree corresponding to $e$ is then
	\[ \bigcap_{p\in e^\circ} \omega_T(\delta_p^e). \]
In the sequel we will be more concerned with describing this directly from the
group.

\begin{definition}\label{def:group-end-dir}
	The \emph{group ends} of a direction $\delta\subseteq T$ is the set of
	group elements
		\[ \delta(G) = \{ g\in G | \omega_T(g)\in\omega_T(\delta) \}. \]
\end{definition}

\begin{definition}\label{def:ebox}
	The \emph{asymptotic horizon} of an oriented arc $e\subseteq T$ of a $G$-tree is
	\[ \ebox{e} = \bigcap_{p\in e^\circ} \delta_p^e(G),\]
	where $\delta_p^e$ is the unique direction based at $p$ such that
	$t(e)\in \delta_p^e$.
\end{definition}

\begin{remark}
	In some figures $\ebox{e}$ will be used to indicate the set
	$\{\omega(g) | g\in \ebox{e}\} \subseteq \partial X$ where $X$ is
	hyperbolic. This abuse of notation is used only in illustrative
	figures, and the set of group elements will play the important role in
	the text.
\end{remark}

The asymptotic horizon of an oriented arc $e$ is all hyperbolic group elements
whose axes have an endpoint visible from $e$, when looking in the forward direction
specified by the orientation. The visibility of group ends is sufficient to
find group elements whose axes either contain $e$ or are disjoint from $e$,
exercises in the calculus of axes that are recorded in the next two lemmas.

To fix notation, for an oriented arc $e\subseteq T$ in a $G$-tree, let $R_e^-$
be the connected component of $T\setminus e^\circ$ containing $o(e)$ and
$R_e^+$ the component containing $t(e)$.

\begin{lemma}\label{lem:arc-axis}
	Suppose $e\subseteq T$ is an oriented arc in a $G$-tree
	$T$. Suppose $g\in \ebox{e}$ and $h\in\ebox{\bar{e}}$. Then there is an
	$N > 0$ such that for all $n\geq N,$ $f=h^{-n}g^n$ is hyperbolic and $e\subseteq
	C_f$. Moreover the orientation of $e$ agrees with the orientation on
	$C_f$ induced by $f$.
\end{lemma}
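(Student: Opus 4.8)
The plan is to use the hypotheses $g \in \ebox{e}$ and $h \in \ebox{\bar e}$ to locate the axes of $g$ and $h$ relative to the arc $e$, and then to apply the Culler--Morgan fundamental-domain machinery to the element $f = h^{-n} g^n$. First I would unpack what membership in an asymptotic horizon says geometrically. Write $e = [o(e), t(e)]$, and recall $R_e^+$ is the component of $T \setminus e^\circ$ containing $t(e)$ and $R_e^-$ the one containing $o(e)$. Since $g \in \ebox{e} = \bigcap_{p \in e^\circ} \delta_p^e(G)$, for every interior point $p$ of $e$ the attracting endpoint $\omega_T(g)$ lies in $\omega_T(\delta_p^e)$, i.e.\ in the direction at $p$ pointing toward $t(e)$. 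Taking $p$ close to $t(e)$, this forces $\omega_T(g) \in \omega_T(R_e^+)$; taking $p$ close to $o(e)$ it forces the \emph{whole} ray toward $\omega_T(g)$ to pass through $e^\circ$, so in fact $C_g$ contains a subray of $e$ oriented compatibly with $e$, or at least $C_g$ meets $R_e^+$ with $\omega_T(g)$ on that side and the bridge from $C_g$ to $e$ enters $e$ through $t(e)$. Symmetrically, $h \in \ebox{\bar e}$ gives $\omega_T(h) \in \omega_T(R_e^-)$, with the analogous statement after reversing the orientation of $e$; so $\omega_T(h)$ is "behind" $o(e)$.

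Next I would pass to high powers. By Lemma~\ref{lem:cm-axes}, $C_{g^n} = C_g$ and $\ell_T(g^n) = n\,\ell_T(g)$, and likewise for $h$; the point of raising to a power is to make the translation lengths large compared to $\len_T(e)$ and to the size of the bridges $d(C_g, e)$ and $d(C_h, e)$, which are fixed finite numbers. Set $N$ large enough that $n\,\ell_T(g) > \len_T(e) + d(C_g,e) + d(C_h,e)$ and the same with $h$. Now I would analyze $C_{g^n} \cap C_{h^n}$. From the first paragraph, $\omega_T(g) \in \omega_T(R_e^+)$ and $\omega_T(h) \in \omega_T(R_e^-)$, so the attracting directions of $g^n$ and $h^{-n}$ (note $h^{-n}$ has attracting end $\omega_T(h)$... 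I want $f = h^{-n}g^n$, so the relevant element is $h^{-n}$, whose attracting end is $\omega_T(h^{-1})$ — here one must be careful which endpoint of $C_h$ is "behind" $e$; the hypothesis $h \in \ebox{\bar e}$ is exactly the statement controlling $\omega_T(h)$, and reversing to $h^{-1}$ is harmless since $\ebox{\bar e}$ is the set needed to make the argument symmetric under $e \mapsto \bar e$, $f \mapsto f^{-1}$). The key geometric claim is that $e \subseteq [\, \omega_T(h^{-1}), \omega_T(g)\,]$, the bi-infinite geodesic joining the two boundary points, because that geodesic must exit $R_e^-$ toward $\omega_T(h^{-1})$ through $o(e)$ and exit $R_e^+$ toward $\omega_T(g)$ through $t(e)$, hence crosses $e^\circ$.

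Then I would show this bi-infinite geodesic is the axis $C_f$. Since $C_g$ and $C_h$ are disjoint or meet in a bounded arc that lies outside $e^\circ$ (because one endpoint-direction points forward through $e$ and the other backward), for $n \geq N$ the Culler--Morgan fundamental domain for $f = h^{-n} g^n$ on $C_f$ is an explicit concatenation of: a piece of $C_h$ of length $> $ (bridge sizes), the bridge geodesic from $C_h$ to $C_g$ (which contains $e$), and a piece of $C_g$; the length condition guarantees no backtracking, so $f$ is hyperbolic with axis containing $e$. The orientation statement is then automatic: $f$ translates along $C_f$ toward $\omega_T(g) \in R_e^+$, i.e.\ in the direction from $o(e)$ to $t(e)$, matching the given orientation of $e$. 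I would get the uniform $N$ by noting the bridges $d(C_g, e)$, $d(C_h, e)$ and the overlap (if any) of $C_g, C_h$ are all determined once $g,h,e$ are fixed, so a single threshold works for all larger $n$. Concretely one can invoke Lemma~\ref{lem:cm-lf-disjoint} or Lemma~\ref{lem:cm-lf-isect} applied to $h^{-n}$ and $g^n$ to certify hyperbolicity and compute $C_f$, rather than re-deriving the fundamental domain by hand.

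The main obstacle I anticipate is the careful bookkeeping in translating "$g \in \ebox{e}$" into the precise statement about where $C_g$ sits and through which endpoint of $e$ its bridge enters — in particular handling the boundary case where $C_g$ actually contains part of $e$ (so the bridge is degenerate), versus where $C_g$ is off to one side. The cleanest route is probably to prove once and for all the auxiliary claim: $g \in \ebox{e}$ iff the geodesic ray from $t(e)$ to $\omega_T(g)$ does not re-enter $R_e^-$, equivalently $t(e) \in [o(e), \omega_T(g)]$ in the sense that $o(e), t(e), \omega_T(g)$ are "in order" along a geodesic; with that lemma in hand the containment $e \subseteq C_f$ and the orientation claim both fall out of the tree-geodesic order relation, and the powers are only needed to upgrade "the bi-infinite geodesic $[\omega_T(h^{-1}), \omega_T(g)]$ contains $e$" to "this geodesic is invariant under $f$ with the right translation length," which is the content of the Culler--Morgan domain estimate.
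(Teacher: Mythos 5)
Your plan is the paper's plan --- read $g\in\ebox{e}$ and $h\in\ebox{\bar{e}}$ as saying $\omega_T(g)\in\omega_T(R_e^+)$ and $\omega_T(h)\in\omega_T(R_e^-)$, then drive a Culler--Morgan fundamental domain for $f=h^{-n}g^n$ from deep inside $R_e^-$ to deep inside $R_e^+$ --- but as written several steps fail. The central one is the ``key geometric claim'' $e\subseteq[\omega_T(h^{-1}),\omega_T(g)]$. The hypothesis $h\in\ebox{\bar{e}}$ controls only the attracting end $\omega_T(h)$; nothing constrains $\omega_T(h^{-1})$, which can perfectly well lie in $\omega_T(R_e^+)$ (for instance when $C_h\supseteq e$ with $h$ translating toward $o(e)$), and in that case the geodesic you name has both ends in $\omega_T(R_e^+)$, lies entirely in $R_e^+$, and misses $e$. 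The appeal to the symmetry $e\mapsto\bar{e}$, $f\mapsto f^{-1}$ does not repair this. The correct bookkeeping is that $\omega_T(h^{-1})$ never enters: for the product $h^{-n}g^{n}$ the backward endpoint of the Culler--Morgan domain is a point of the form $q\cdot(h^{-n})^{-1}=q\cdot h^{n}$, which for large $n$ lies far out toward the \emph{attracting} end $\omega_T(h)\in\omega_T(R_e^-)$, while the forward endpoint $q'\cdot g^{n}$ lies far out toward $\omega_T(g)\in\omega_T(R_e^+)$; since $e$ is the unique geodesic joining $R_e^-$ to $R_e^+$, this segment of $C_f$ contains $e$ with the correct orientation. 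That is the whole mechanism, and it makes no reference to a limiting bi-infinite geodesic ($C_f$ is in any case not $[\omega_T(h),\omega_T(g)]$ for any finite $n$; its ends are limits of $f$-translates).

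Second, the structural claim that $C_g\cap C_h$ is empty or a bounded arc ``outside $e^\circ$'' is false under the stated hypotheses: both axes may contain $e$ (with opposite induced orientations, so the overlap meets $e^\circ$), they may share a ray, or they may coincide outright. The unbounded-intersection case needs its own argument (the paper pivots at the basepoint of the common ray, and takes $N=1$ when $C_g=C_h$), and your uniform $N$ must be chosen relative to whichever configuration occurs. Relatedly, the bridge from $C_h$ to $C_g$ need not contain $e$ --- if $C_g\supseteq e$ and $C_h\subseteq R_e^-$ the bridge attaches to $C_g$ inside $R_e^-$ --- so the description ``piece of $C_h$, bridge containing $e$, piece of $C_g$'' is not always accurate; only the full fundamental domain, stretching between the two translated points above, is guaranteed to contain $e$. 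Once you replace $\omega_T(h^{-1})$ by $\omega_T(h)$, drop the bounded-overlap assumption in favour of a case analysis on $C_g\cap C_h$ (empty; a point or bounded arc; containing a ray), and in each case take $N$ larger than the relevant distances divided by $\min\{\ell_T(g),\ell_T(h)\}$, your argument becomes the paper's proof.
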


\begin{proof}
	Consider the intersection $C_g \cap C_h$. There are three cases.

	\emph{Case 1: $C_g\cap C_h = \emptyset$.} Let $a$ be the unique
	shortest oriented arc joining $C_g$ to $C_h$ with $t(a)\in C_g$.
	Take
		\[ N > \frac{d_T(e,a)+\len(e)}{\min\{\ell_T(g),\ell_T(h)\}} \]
	and suppose $n\geq N$. Consider the Culler-Morgan fundamental domain
	for the action of $f=h^{-n}g^n$ on its axis: the geodesic path $b$ in
	$T$ passing through the points
		\[ [o(a)\cdot h^n, o(a), t(a),t(a)\cdot g^n,o(a)\cdot g^n]. \]
	By hypothesis, the axis $C_h$ meets $R_e^-$ in at least a positive
	ray and $hR_e^- \subseteq R_e^-$. If $o(a) \in T\setminus R_e^-$, then
	the ray of $C_h$ based at $o(a)$ directed at $\omega_T(h)$ must pass
	through $o(a)$. By the choice of $N$, $o(a)\cdot h^n \in R_e^-$.
	Similarly, $t(a)\cdot g^n \in R_e^+$. The arc $e$ is the unique
	geodesic in $T$ joining $R_e^-$ to $R_e^+$, hence $e\subseteq b$.
	Moreover, the action of $f$ takes $o(b) = o(a)\cdot h^n$ to 
	$t(b) = o(a)\cdot g^n$, so the orientations of $e$ and $b$ agree, as
	required.

	\emph{Case 2: $C_g\cap C_h = a \neq \emptyset$, $a$ a point or
	arc.} Orient $a$ according to the orientation of $g$. (When $a$ is a
	point, orientation does not matter; we use the convention $o(a) = a =
	t(a)$.) Take
		\[ N > \frac{d_T(e,a) + \len(e) +
		\len(a)}{\min\{\ell_T(g),\ell_T(h)\}} \]
	and suppose $n\geq N$. Again consider the Culler-Morgan fundamental
	domain for the action of $f = h^{-n}g^n$ on its axis. It contains
	(regardless of the agreement between the orientations of $h$ and $a$)
	the geodesic path $b$ in $T$ passing through the points
		\[ [t(a)\cdot h^n,t(a),t(a)\cdot g^n]. \]
	As in the previous case, we find $t(a)\cdot h^n\in R_e^{-}$ and
	$t(a)\cdot g^n\in R_e^+$. We conclude $e\subseteq b$ and the
	orientations agree.

	\emph{Case 3: $C_g\cap C_h$ contains a ray.} If $C_g = C_h$
	then $e\subseteq C_{h^{-1}g} = C_g = C_h$ and $N=1$ suffices. So suppose $C_g\neq
	C_h$. Let $p\in C_g\cap C_h$ be the basepoint of the common ray.
	Take
		\[ N > \frac{d_T(p,e)+\len(e)}{\min\{\ell_T(g),\ell_T(h)\}} \]
	and suppose $n\geq N$. Once more, a fundamental domain for the action
	of $f = h^{-n}g^n$ on its axis can be described. It contains the
	geodesic path $b$ in $T$ passing through the points
		\[ [p\cdot h^n, p, p\cdot g^n].\]
	By the choice of $n$, we find $p\cdot h^n \in R_e^-$ and $p\cdot g^n\in
	R_e^+$. We conclude $e\subseteq b$ and the orientations agree.
\end{proof}

\begin{lemma}\label{lem:pos-axis}
	Suppose $e\subseteq T$ is an oriented arc in a $G$-tree
	$T$. Suppose $g,h \in \ebox{e}$ and $\omega_T(g)\neq
	\omega_T(h)$. Then there is an $N > 0$ such that for all $n\geq N$, $f=h^{-n}g^n$ is
	hyperbolic and $C_f \subseteq R_e^+$.
\end{lemma}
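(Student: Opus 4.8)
The plan is to follow the fundamental-domain bookkeeping of Lemma~\ref{lem:arc-axis}, with $g$ and $h$ now playing symmetric roles --- both ``pointing forward'' from $e$ --- rather than opposite ones. First I would convert the hypothesis $g,h\in\ebox{e}$ into information about the axes. If $g\in\ebox{e}$ then $\omega_T(g)\in\omega_T(\delta_p^e)$ for \emph{every} interior point $p$ of $e$; chaining this over interior points running toward $t(e)$ shows the geodesic ray from any interior point to $\omega_T(g)$ runs along $e$, passes through $t(e)$, and then enters $R_e^+$ and stays there, since a ray cannot re-cross the open arc $e^\circ$. Hence the projection of $t(e)$ onto $C_g$ lies in $\overline{R_e^+}$, the forward half of $C_g$ beyond it lies in $R_e^+$, and $C_g\cap R_e^+$ contains a ray converging to $\omega_T(g)$; symmetrically for $h$. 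In particular $\omega_T(g),\omega_T(h)\in\omega_T(R_e^+)$, and, as $\omega_T(g)\neq\omega_T(h)$, the point where the rays $[t(e),\omega_T(g))$ and $[t(e),\omega_T(h))$ first separate is a well-defined point of $R_e^+$.

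Next I would case on $C_g\cap C_h$ exactly as in the proof of Lemma~\ref{lem:arc-axis} (empty; a point or bounded arc; containing a ray), and in each case write down the Culler--Morgan fundamental domain for the action of $f=h^{-n}g^{n}$ on $C_f$, for $n$ large. The domain is the geodesic through points of the form $p\cdot h^{n},\,p,\,\dots,\,q\cdot g^{n},\,p\cdot g^{n}$ with $p$ on or near $C_h$ and $q$ on or near $C_g$; choosing $N$ by the counting estimate of that lemma, with the numerator enlarged to account for the distance from $e$ to the bridge (or overlap) and to the separation point above, forces every vertex of the domain into $R_e^+$ once $n\ge N$, because the $g^{n}$- and $h^{n}$-translates are pushed deep along the forward rays of $C_g$ and $C_h$, which lie in $R_e^+$. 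As $R_e^+$ is a convex subtree the whole fundamental domain $D$ lies in $R_e^+$, and as its length is $n\ell_T(g)+n\ell_T(h)+2d(C_g,C_h)$ (or the analogous overlap quantity), which is positive, $f$ is hyperbolic with $D\subseteq C_f$.

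It then remains to upgrade ``$D\subseteq R_e^+$'' to ``$C_f\subseteq R_e^+$''. By construction $f$ carries the ``$h^{n}$-endpoint'' of $D$ to its ``$g^{n}$-endpoint'', so $\omega_T(f)$ lies on the $\omega_T(g)$ side of $e$ and $\omega_T(f^{-1})$ on the $\omega_T(h)$ side; by the first paragraph both are points of $\omega_T(R_e^+)$. A bi-infinite geodesic both of whose ends lie in the subtree $R_e^+$ is contained in $R_e^+$, so $C_f\subseteq R_e^+$.

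The step I expect to be the real obstacle is the configuration in which $C_g$ and $C_h$ are asymptotic, i.e.\ share a ray toward a common end of $T$: then the bridge between the axes degenerates, the positivity of $d(C_g,C_h)$ used above is lost, and $h^{-n}g^{n}$ need not even be hyperbolic, so this case requires separate control using $\omega_T(g)\neq\omega_T(h)$ and the location of the shared end relative to $e$. In the intended applications it does not arise: there $g$ and $h$ are manufactured from good pairs and are $T$-independent (Lemma~\ref{lem:gp-free-indep}), so $C_g\cap C_h$ is automatically a point or a bounded arc and the picture of the second and third paragraphs applies verbatim. A lesser technical point is checking that the single constant $N$ is large enough to keep \emph{every} translated vertex of the domain inside $R_e^+$, not merely those closest to $e$.
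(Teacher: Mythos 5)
Your setup (the forward rays of $C_g$ and $C_h$ lie in $R_e^+$, hence $\omega_T(g),\omega_T(h)\in\omega_T(R_e^+)$) and your construction of the Culler--Morgan fundamental domain match the paper, but the step that carries all the weight is the one you pass over. Knowing that one fundamental domain $D$ of $f=h^{-n}g^{n}$ lies in $R_e^+$ says nothing yet about where the rest of $C_f=\bigcup_k D\cdot f^{k}$ goes: $R_e^+$ is not $f$-invariant, and the ends $\omega_T(f^{\pm 1})$ are in general new points of $\partial T$, not among the ends controlled by your first paragraph, which locates only $\omega_T(g)$ and $\omega_T(h)$. Your sentence ``$f$ carries the $h^{n}$-endpoint of $D$ to its $g^{n}$-endpoint, so $\omega_T(f)$ lies on the $\omega_T(g)$ side of $e$\ldots both are points of $\omega_T(R_e^+)$'' asserts the conclusion rather than proving it; this is exactly the trap the paper flags in its own proof (``it is tempting to take $N=1$, however we must exercise care to ensure that the axis of the product is contained in $R_e^+$, as this axis is not the infinite geodesic previously mentioned''). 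The paper closes the gap by imposing the further conditions that $t(a)\cdot g^{n}$ and $o(a)\cdot h^{n}$ overshoot the projections $t(\alpha_g),t(\alpha_h)$ of $t(e)$ onto $C_g$ and $C_h$, and then identifying the closest point of $C_f$ to $o(e)$ as the median of $t(\alpha_g),t(\alpha_h),t(e)$, an interior point of the fundamental domain lying in $R_e^+$; only from that does $C_f\cap e^{\circ}=\emptyset$ follow. Your ``enlarged numerator'' gestures at these conditions, and your two-ends strategy could in fact be completed with them (a ray from $t(a)$ through $t(a)\cdot g^{n}$ cannot later reach $t(e)$ once $t(a)\cdot g^{n}$ lies beyond $t(\alpha_g)$), but the deduction of $\omega_T(f^{\pm 1})\in\omega_T(R_e^+)$ is precisely the substance of the lemma and is missing from your write-up.

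Separately, you explicitly decline to treat the case in which $C_g\cap C_h$ contains a ray, on the grounds that it does not arise in the intended applications. The lemma is stated, and invoked, in full generality, and the paper does address this configuration (its Case 3: since $g,h\in\ebox{e}$ the basepoint $p$ of the common ray lies in $R_e^+$, and the Case 1 argument is run from $p$). A proof of the statement as given cannot simply omit it.
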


\begin{proof}
	As in the proof of the previous lemma, there are three cases depending
	on $C_g\cap C_h$.

	\emph{Case 1: $C_g\cap C_h = \emptyset$.} Let $a$ be the oriented
	geodesic from $C_h$ to $C_g$, so that $t(a)\in C_g$. Let
	$C_g^{+}$ and $C_h^{+}$ be the positive rays of $C_g$ and $C_h$ based at
	$t(a)$ and $o(a)$ respectively. The infinite geodesic
	$C_h^{+}\cup a \cup C_g^{+}$ has both endpoints in $\partial R_e^+$, so must be contained
	in $R_e^+$, therefore $a\subseteq R_e^+$. At this point it is tempting
	to take $N=1$, however we must exercise care to ensure that the axis of
	the product is contained in $R_e^+$, as this axis is not the infinite
	geodesic previously mentioned.

	Since $g, h \in \ebox{e}$, there is an integer $N_1 > 0$ such that for
	all $n \geq N_1$ we have
	\begin{align*}
		d(t(a)\cdot g^n,t(e)) &> d(t(a),t(e)) \\
		&\mbox{and}\\
		d(o(a)\cdot h^n,t(e)) &> d(o(a),t(e)). 
	\end{align*}

	Let $\alpha_g$ and $\alpha_h$ be the geodesics from $t(e)$ to $C_g$
	and $C_h$ respectively, oriented such that $o(\alpha_g) = o(\alpha_h)
	= o(e)$. Since $g$ acts by translation on its axis in the direction of
	$\omega_T(g)$, there is an $N_2$ such that for all $n\geq N_2$,
	$t(a)\cdot g^n > t(\alpha_g)$ (in the orientation on $C_g$
	induced by the action of $g$). Similarly there is an $N_3$ such that
	for all $n\geq N_3$ $o(a)\cdot h^n > t(\alpha_h)$. Take $N = \max\{N_1,
	N_2, N_3\}$.

	Suppose $n\geq N$. As in the previous lemma, we use the Culler-Morgan
	fundamental domain for the action of $f=h^{-n}g^n$ on $C_f$: the
	geodesic $b$ passing through the points
		\[ [o(a)\cdot h^n,o(a),t(a),t(a)\cdot g^n].\]
	By construction, $b\subseteq R_e^+$. Further, the geodesic from
	$t(\alpha_g)$ to $t(\beta_g)$ is a proper subarc of $b$. Therefore, the
	center $u$ of the geodesic triangle $t(\alpha_g),t(\alpha_h),t(e)$ is in
	the interior of $b$. This point is, by construction, the unique closest point
	of $b$ to $o(e)$. Since $u$ is in the interior of $b$, $u$ is also the
	unique closest point of $C_f$ to $o(e)$, whence $e\nsubseteq C_f$
	and so $C_f\subseteq R_e^+$ as required.

	\emph{Case 2: $C_g \cap C_h = a\neq \emptyset$, $a$ an arc or
	point.} Orient $a$ so that it agrees with the orientation of $C_g$
	induced by the action of $g$ (again with the convention that if $a$ is
	a point, $o(a) = a = t(a)$). If the orientations of $C_g$ and $C_h$
	disagree on $a$, then with $C_g^{+}$ and $C_h^{+}$ defined as in the
	previous case, the previous argument applies. If the orientations of
	$C_g$ and $C_h$ agree on $a$, let $C_g^{+}$ be as before and
	instead take $C_h^{+}$ to be the infinite ray of $C_h$ based at
	$t(a)$. The infinite geodesic $C_g^{+}\cup C_h^{+}$ has both
	endpoints in $\partial R_e^+$, so $t(a) \in R_e^+$. The argument from
	the previous case then applies, \textit{mutatis mutandis}, with $t(a)$
	in place of $o(a)$.

	\emph{Case 3: $C_g \cap C_h$ contains a ray.} In this case, since
	$\omega_T(g)\neq \omega_T(h)$, $C_g\neq C_h$. Let $p$ be the
	basepoint of the common ray $C_g\cap C_h$. Since $g, h\in
	\ebox{e}$, we must have $p\in R_e^+$. The argument from case one then
	applies, \textit{mutatis mutandis}, with $p$ in place of $o(a)$.
\end{proof}
 
\section{Synthetic compatibility conditions}
\label{sec:comb-compat}

Recall the motivating examples of the introduction. The intersection of
boundary sets is naturally captured \emph{in the group} by the asymptotic
horizons, and Figure~\ref{fig:horizon-isect} gives the geometric motivation for
the following definition.

\begin{definition}\label{def:comp-tree}
	Two $G$-trees $A$ and $B$ have \emph{incompatible horizons} if there are
	oriented arcs $a\subseteq A$ and $b\subseteq B$ such that the four sets
		\[ \ebox{a}\cap\ebox{b}\quad
		   \ebox{\bar{a}}\cap\ebox{b}\quad
		   \ebox{a}\cap\ebox{\bar{b}}\quad
		   \ebox{\bar{a}}\cap\ebox{\bar{b}} \]
	are non empty.
\end{definition}

\begin{remark} 
	Behrstock, Bestvina, and Clay~\cite{bbc} consider a similar collection
	of sets when giving a criterion for the presence of a rectangle in the
	Guirardel core of two free simplicial $F_r$ trees.
\end{remark}

Pairs of group elements with either overlapping or disjoint axes for a given
action capture the situations in
Figure~\ref{fig:lam-inc-comb}~and~Figure~\ref{fig:lam-inc-or}. Let $P(G) =
G\times G\setminus \Delta$ be the set of all distinct pairs of elements in our
group.

\begin{definition}\label{def:cd-tree}
	For a $G$-tree $T$ the \emph{overlap set}, $\sO^T \subseteq P(G)$, is
	all pairs $(g,h) \in P(G)$ such that $g$ and $h$ are hyperbolic and
	$C_g\cap C_h$ contains an arc.

	The \emph{disjoint set}, $\sD^T \subseteq P(G)$, is
	all pairs $(g,h)\in P(G)$ such that $C_g\cap C_h =
	\emptyset$.
\end{definition}

This definition can also be stated for length functions.

\begin{definition}\label{def:cd-lf}
	For a length function $\ell:G\to \mathbb{R}_{\geq 0}$ the
	\emph{overlap set}, $\sO^\ell \subseteq P(G)$
	is all pairs $(g,h) \in P(G)$ such that
		\[ \ell(gh)\neq \ell(gh^{-1}). \]

	The \emph{disjoint set}, $\sD^\ell \subseteq P(G)$
	is all pairs $(g,h)\in P(G)$ such that
		\[ \ell(gh) = \ell(gh^{-1}) > \ell(g)+\ell(h). \]
\end{definition}

In the definition for a tree, the hyperbolicity requirement for
membership in $\sO^T$ is necessary, but the length function requirement
implies that $\sO^\ell$ consists of pairs of hyperbolic elements.

\begin{lemma}\label{lem:c-hyp}
	Suppose $\ell$ is a length function on $G$. If $(g,h)\notin \sD^\ell$
	satisfies $\ell(g) = 0$, then
		\[ \ell(gh) = \ell(gh^{-1}) = \ell(h). \]
	In particular all pairs in $\sO^\ell$ are pairs of hyperbolic elements.
\end{lemma}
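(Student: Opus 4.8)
The plan is to derive the ``in particular'' clause from the displayed identity and then to prove that identity directly from the axioms, treating $\ell(h)=0$ and $\ell(h)>0$ separately. For the reduction: if $(g,h)\in\sO^\ell$ then $\ell(gh)\neq\ell(gh^{-1})$, so in particular $(g,h)\notin\sD^\ell$, since membership in $\sD^\ell$ demands $\ell(gh)=\ell(gh^{-1})$. Were $g$ elliptic, the displayed identity would give $\ell(gh)=\ell(gh^{-1})$, a contradiction; were $h$ elliptic the same contradiction follows after exchanging the roles of $g$ and $h$, which is legitimate because Axioms I--III make $\sD^\ell$ a symmetric relation (and $\ell(gh^{-1})=\ell(hg^{-1})$). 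So it suffices to prove the displayed identity.

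\emph{The case $\ell(h)=0$.} By Axiom IV applied to $(g,h)$, either $\ell(gh)=\ell(gh^{-1})$, or $\max\{\ell(gh),\ell(gh^{-1})\}\le\ell(g)+\ell(h)=0$, in which case both vanish; either way $\ell(gh)=\ell(gh^{-1})=:c$. Since $(g,h)\notin\sD^\ell$ we cannot have $c>\ell(g)+\ell(h)=0$, hence $c=0=\ell(h)$.

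\emph{The case $\ell(h)>0$.} First I would prove $\ell(gh)=\ell(gh^{-1})$. Since $\ell(g)=0$, iterating Axiom IV on $(g^{n-1},g)$ gives $\ell(g^n)=0$ for all $n$; writing $c_n:=\ell(g^nh)$ and using cyclic invariance ($\ell(g^nhg)=\ell(g^{n+1}h)$, $\ell(g^nhg^{-1})=\ell(g^{n-1}h)$), Axiom IV on $(g^nh,g)$ gives, for every $n$, that either $c_{n-1}=c_{n+1}$ or $\max\{c_{n-1},c_{n+1}\}\le c_n$. Suppose toward a contradiction $c_1\neq c_{-1}$. The substitution $g\mapsto g^{-1}$ preserves the hypotheses and interchanges $c_1,c_{-1}$, so we may assume $c_1<c_{-1}$; Axiom IV on $(g,h)$ then forces $c_1\le c_0$ and $c_{-1}\le c_0$, and since $c_1<c_0$, Axiom IV on $(g^{-1},gh)$ — whose products are $h$ and a conjugate of $g^2h$, so the alternatives read $c_0=c_2$ or $c_0\le c_1$ — forces $c_2=c_0$. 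I would then propagate this outward, using in addition Axiom V on the pairs $(gh^k,h^{-1})$ (valid once $gh^k$ is hyperbolic), which yields $\ell(gh^{k+1})=\ell(gh^k)+\ell(h)$, to pin down enough of the $c_n$ to contradict $c_1<c_{-1}$. Once $\ell(gh)=\ell(gh^{-1})=:c$ is known, $(g,h)\notin\sD^\ell$ gives $c\le\ell(g)+\ell(h)=\ell(h)$; and the \emph{equality} alternatives of Axiom IV on $(g^{-1},gh)$ and $(g,gh^{-1})$ read $\ell(h)=\ell(g^{2}h)$ and $\ell(h)=\ell(g^{-2}h)$, which together with the Axiom V relations above produce the reverse inequality $c\ge\ell(h)$, hence $c=\ell(h)$.

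\emph{Main obstacle.} The whole case $\ell(h)>0$ is the delicate one: neither Axiom IV nor Axiom V on its own gives a clean recursion for $(c_n)$, so at each stage the ``wrong'' alternative must be excluded, and extracting both the equality $\ell(gh)=\ell(gh^{-1})$ and the exact value $\ell(h)$ seems to require interleaving Axioms IV and V over several shifted pairs. If this bookkeeping proves unmanageable, the fallback is to apply Parry's realization theorem, replace $\ell$ by the translation length function of an irreducible $G$-tree $T$, note that $g$ is elliptic and that $(g,h)\notin\sD^\ell$ forces $\Fix(g)\cap C_h\neq\emptyset$ (otherwise $C_g$ and $C_h$ are disjoint and Lemma~\ref{lem:cm-lf-disjoint} puts $(g,h)$ in $\sD^\ell$), and then evaluate $d_T(p,p\cdot gh)$ and $d_T(p,p\cdot gh^{-1})$ at a point $p\in\Fix(g)\cap C_h$, where both equal $\ell(h)$.
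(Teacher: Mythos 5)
Your fallback is, almost verbatim, the paper's proof: the paper handles $\ell(h)=0$ by Axiom IV exactly as you do, and for $\ell(h)>0$ it realizes $\ell$ on an irreducible tree $T$ (via Parry), uses the contrapositive of Lemma~\ref{lem:cm-lf-disjoint} to get $\Fix(g)\cap C_h\neq\emptyset$, and computes displacements at a point of that intersection. Your reduction of the ``in particular'' clause, and the symmetry argument justifying the swap of $g$ and $h$, are also fine. But the primary axiomatic route you sketch for $\ell(h)>0$ is not a proof as written: the ``propagate this outward \dots to pin down enough of the $c_n$'' step is exactly the hard part, and you do not carry it out. So the fallback is what actually carries the argument, and it should be the main line.

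The one genuine gap is in the last step of that fallback. Evaluating $d_T(p,p\cdot gh)$ at a single point $p\in\Fix(g)\cap C_h$ shows only that $\ell(gh)\le\ell(h)$ (and likewise for $gh^{-1}$): the displacement at a point is an upper bound for the translation length, with equality only when the point lies on $C_{gh}$, which is precisely what has not been verified. The paper closes this by taking an \emph{arbitrary} $p\in T$, letting $q$ be its projection to the subtree $\Fix(g)\cap C_h$, and arguing that $d_T(p,p\cdot gh)\ge d_T(q,q\cdot gh)=d_T(q,q\cdot h)=\ell(h)$, so that the infimum over all points is at least $\ell(h)$. You need some version of this lower bound --- or a direct check that your chosen $p$ lies on the axis of $gh$, e.g.\ by verifying there is no backtracking in the concatenation $[p,p\cdot gh]\cup[p\cdot gh,p\cdot(gh)^2]$ --- before you can upgrade the two inequalities $\ell(gh)\le\ell(h)$ and $\ell(gh^{-1})\le\ell(h)$ to the claimed equalities.
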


\begin{proof}
	First, suppose $\ell(h) = 0$ also. Since $(g,h)\notin\sD^\ell$, length
	function axiom IV implies
		\[ \max\{\ell(gh),\ell(gh^{-1})\} \leq \ell(g)+\ell(h) = 0\]
	and we are done. So suppose $\ell(h) > 0$. Let
	$T$ be the irreducible tree realizing $\ell$. It must be the case that
	$C_g\cap C_h$ is non-empty, by Lemma~\ref{lem:cm-lf-disjoint}. Consider
	$p\in T$ and $\alpha$ the shortest arc from $p$ to $C_g\cap C_h$. Let
	$q$ be the endpoint of $\alpha$ in $C_g\cap C_h$.  Since $g$ is
	elliptic, $\alpha\cdot g\cap C_g\cap C_h$ contains $q$, as does
	$\alpha\cap\alpha\cdot g\cap C_h$. The element $h$ is hyperbolic,
	therefore
	\begin{align*}
		d_T(p,p\cdot gh) &\geq d_T(q,q\cdot gh) = d_T(q,q\cdot h) = \ell(h) \\
		d_T(p,p\cdot gh^{-1}) & \geq d_T(q,q\cdot h^{-1}) = \ell(h), 
	\end{align*}
	and we conclude $\ell(gh) = \ell(gh^{-1}) = \ell(h)$ as required.
\end{proof}

\begin{proposition}
	Suppose $T$ is an irreducible $G$-tree with length function $\ell$.
	Then $\sO^T = \sO^\ell$ and $\sD^T = \sO^\ell$, that is,
	definitions~\ref{def:cd-tree}~and~\ref{def:cd-lf} are equivalent.
\end{proposition}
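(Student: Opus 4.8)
The plan is to prove the two equalities separately, in each case reducing to the Culler--Morgan calculus of characteristic sets; I take the intended second equality to be $\sD^T = \sD^\ell$ and will establish $\sO^T=\sO^\ell$ and $\sD^T=\sD^\ell$.

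For the overlap sets, I would first note that membership in $\sO^T$ requires $g$ and $h$ hyperbolic by definition, and membership in $\sO^\ell$ does too by Lemma~\ref{lem:c-hyp}, so it suffices to compare the two defining conditions on a fixed pair $g,h$ of hyperbolic elements. Then I would apply Lemma~\ref{lem:cm-lf-isect} twice, once to $(g,h)$ and once to $(g,h^{-1})$, using that $C_{h^{-1}}$ is $C_h$ with the reversed orientation. The ``moreover'' clause gives that $\ell(gh)>\ell(gh^{-1})$ holds exactly when $C_g\cap C_h$ contains an arc on which the orientations of $C_g$ and $C_h$ agree, and $\ell(gh^{-1})>\ell(gh)$ holds exactly when $C_g\cap C_h$ contains an arc on which they disagree. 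Since on an arc two orientations are either equal or opposite, these two cases together show that $\ell(gh)\neq\ell(gh^{-1})$ if and only if $C_g\cap C_h$ contains an arc, which is precisely $\sO^T=\sO^\ell$.

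For the disjoint sets, the inclusion $\sD^T\subseteq\sD^\ell$ is immediate from Lemma~\ref{lem:cm-lf-disjoint}: if $C_g\cap C_h=\emptyset$ then $\ell(gh)=\ell(gh^{-1})=\ell(g)+\ell(h)+2d(C_g,C_h)$, and since two disjoint closed subtrees of a real tree are joined by a bridge of positive length we have $d(C_g,C_h)>0$, so the pair lies in $\sD^\ell$. For the reverse inclusion, suppose $\ell(gh)=\ell(gh^{-1})>\ell(g)+\ell(h)$ and, for contradiction, $C_g\cap C_h\neq\emptyset$. If both elements are hyperbolic, Lemma~\ref{lem:cm-lf-isect} gives $\max\{\ell(gh),\ell(gh^{-1})\}=\ell(g)+\ell(h)$, contradicting the strict inequality. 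If one element, say $g$, is elliptic, choose $q\in C_g\cap C_h=\Fix(g)\cap C_h$; then $q\cdot gh=q\cdot h$, so using Lemma~\ref{lem:cm-axes} and $q\in C_h$ we get $\ell(gh)\le d(q,q\cdot h)=\ell(h)=\ell(g)+\ell(h)$, again a contradiction. Hence $C_g\cap C_h=\emptyset$, i.e.\ $(g,h)\in\sD^T$.

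The argument is largely bookkeeping with Lemmas~\ref{lem:cm-axes}, \ref{lem:cm-lf-disjoint}, \ref{lem:cm-lf-isect}, and \ref{lem:c-hyp}. The two places that need a little care are the treatment of elliptic elements in the disjoint case --- which is exactly why $\sD^\ell$, unlike $\sO^\ell$, need not consist of hyperbolic pairs --- and the elementary real-tree fact that disjoint closed subtrees lie at positive distance, without which Lemma~\ref{lem:cm-lf-disjoint} would not yield the strict inequality in the definition of $\sD^\ell$.
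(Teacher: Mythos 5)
Your proof is correct and follows essentially the same route as the paper's: both reduce everything to the Culler--Morgan lemmas on characteristic sets (Lemmas~\ref{lem:cm-lf-disjoint} and \ref{lem:cm-lf-isect}) together with Lemma~\ref{lem:c-hyp}, and you rightly read the second equality as $\sD^T=\sD^\ell$. You are merely more explicit where the paper says ``immediate from the definitions'' --- in particular about the positive distance between disjoint closed subtrees and the elliptic case in $\sD^\ell$ --- which is fine.
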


\begin{proof}
	It is immediate from the definitions that $\sO^T \subseteq \sO^\ell$
	and similarly $\sD^T \subseteq \sD^\ell$.

	To demonstrate the reverse inclusions, suppose $(g, h) \in \sO^\ell$.
	By Lemma~\ref{lem:c-hyp}, $g$ and $h$ are hyperbolic. If, for a
	contradiction, $(g,h)\not\in \sO^T$ then either $C_g\cap C_h =
	\emptyset$ or $C_g\cap C_h = \{\ast\}$. In either case we have
		\[ \ell(gh) = \ell(gh^{-1}) = \ell(g)+\ell(h)+d_T(C_g,C_h), \]
	a contradiction.

	If $(g,h)\in \sD^\ell$ but $(g,h)\notin \sD^T$ then $C_g\cap C_h$ is
	non-empty, and so
		\[ \max\{\ell(gh),\ell(gh^{-1})\} \leq \ell(g)+\ell(h), \]
	a contradiction.
\end{proof}

Note that the definitions of $\sO^\ell$ and $\sD^\ell$ depend only on the
projective class of $\ell$; the axis overlap condition is a topological
property of a tree, so this is expected. Also be aware that $\sO^\ell \cup
\sD^\ell \neq P(G)$; pairs such that $\ell(gh) =
\ell(gh^{-1}) = \ell(g)+\ell(h)$ exist.

The interaction of overlap and disjoint sets captures the situations
pictured in Figure~\ref{fig:lam-inc-comb} and Figure~\ref{fig:lam-inc-or}. We state
the definitions in terms of length functions.

\begin{definition}\label{def:cc}
	Two length functions $\ell$ and $m$ on a group $G$ have
	\emph{compatible combinatorics} if 
		\[ \sO^\ell\cap \sD^m = \sD^\ell\cap \sO^m = \emptyset. \]
\end{definition}

\begin{remark} The equivalent definition for trees is vacuous for lineal
	actions. For an lineal action the tree is a line, and the disjoint set
	is empty, hence all lineal actions have compatible combinatorics.
\end{remark}

\begin{definition}\label{def:co}
	Two length functions $\ell$ and $m$ on a group $G$ are \emph{coherently
	oriented} if for all $(g, h)\in \sO^\ell\cap\sO^m$
	\[ \ell(gh^{-1}) < \ell(gh) \Leftrightarrow m(gh^{-1}) < m(gh). \]
\end{definition}

The figures in the motivating discussion strongly suggest that
these three compatibility definitions are equivalent, at least for irreducible
$G$-trees. Further motivation is provided by the following lemma, which
produces pairs of group elements with distinct axes, mirroring the pictures.

\begin{lemma}\label{lem:char-reps}
	Suppose $A$ and $B$ are irreducible $G$-trees that are have incompatible
	horizons. Let $a\subseteq A$ and
	$b\subseteq B$ be arcs witnessing this fact. Then there exist group
	elements $g\in \ebox{a}\cap\ebox{b}$ and
	$\alpha\in\ebox{a}\cap\ebox{\bar{b}}$ such that $C_g^A\cap C_\alpha^A$
	is bounded; and elements $h\in \ebox{\bar{a}}\cap\ebox{\bar{b}}$ and
	$\beta\in\ebox{\bar{a}}\cap\ebox{b}$ such that $C_h^B\cap C_\beta^B$ is
	bounded.
\end{lemma}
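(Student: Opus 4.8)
The plan is to pull a witnessing element out of each of the four non-empty sets and then synthesize the required pairs from them, using the axis calculus of Section~\ref{sec:tree-ends}. Throughout I will use the elementary fact that two bi-infinite geodesics in a real tree meet in a bounded (possibly empty) set unless they share an endpoint of the visual boundary: the intersection is closed and convex, hence an interval in either line, and an unbounded interval is a ray or a line, forcing a common end. So both assertions reduce to producing elements whose $A$-axes (respectively $B$-axes) have no common end, and in the cases below I will in fact make the two axes lie in disjoint half-trees or overlap in exactly the witnessing arc. Fix $g_1\in\ebox a\cap\ebox b$, $g_2\in\ebox a\cap\ebox{\bar b}$, $g_3\in\ebox{\bar a}\cap\ebox b$, $g_4\in\ebox{\bar a}\cap\ebox{\bar b}$, where the horizons $\ebox a,\ebox{\bar a}$ are taken in $A$ and $\ebox b,\ebox{\bar b}$ in $B$; recall $\ebox a^A=\{x:\omega_A(x)\in\omega_A(R_a^{+})\}$, $\ebox{\bar a}^A=\{x:\omega_A(x)\in\omega_A(R_a^{-})\}$, and likewise in $B$.

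For the pair in $B$ I would take $h:=g_2^{-n}g_4^{n}$ and $\beta:=g_1^{-n}g_3^{n}$ with $n$ large. Feeding these into Lemma~\ref{lem:arc-axis} in the tree $A$ with the arc $\bar a$ --- the roles being ``$g$''$=g_4$, ``$h$''$=g_2$ for the word $h$, and ``$g$''$=g_3$, ``$h$''$=g_1$ for $\beta$, all of which lie in the required horizons $\ebox{\bar a}^A$ or $\ebox{\overline{\bar a}}^A=\ebox a^A$ --- gives $\bar a\subseteq C_h^A$ and $\bar a\subseteq C_\beta^A$, so $h,\beta\in\ebox{\bar a}^A$. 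Feeding the same words into Lemma~\ref{lem:pos-axis} in the tree $B$, with the arc $\bar b$ for $h$ (using $g_2,g_4\in\ebox{\bar b}^B$) and with the arc $b$ for $\beta$ (using $g_1,g_3\in\ebox b^B$), gives $C_h^B\subseteq R_b^{-}$ and $C_\beta^B\subseteq R_b^{+}$. In particular $h\in\ebox{\bar b}^B$, $\beta\in\ebox b^B$, and since $R_b^{-}$ and $R_b^{+}$ are distinct components of $B\setminus b^{\circ}$ the axes are disjoint, so $C_h^B\cap C_\beta^B=\emptyset$ is bounded.

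For the pair in $A$ the same trick fails, because $g$ and $\alpha$ must both lie in $\ebox a^A$, confining both $A$-axes to the single half-tree $R_a^{+}$. Instead I would take $g:=g_2^{-n}g_1^{n}$ and $\alpha:=g_4^{-n}g_2^{n}$. Lemma~\ref{lem:pos-axis} in $A$ on the arc $a$ (inputs $g_1,g_2\in\ebox a^A$) forces $C_g^A\subseteq R_a^{+}$, while Lemma~\ref{lem:arc-axis} in $B$ on the arc $b$ (inputs $g_1\in\ebox b^B$, $g_2\in\ebox{\bar b}^B$) gives $b\subseteq C_g^B$, so $g\in\ebox a^A\cap\ebox b^B$; symmetrically, Lemma~\ref{lem:arc-axis} in $A$ on $a$ gives $a\subseteq C_\alpha^A$ and Lemma~\ref{lem:pos-axis} in $B$ on $\bar b$ gives $C_\alpha^B\subseteq R_b^{-}$, so $\alpha\in\ebox a^A\cap\ebox{\bar b}^B$. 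Now $C_g^A$ is a line inside $R_a^{+}$, while $C_\alpha^A$ contains $a$ and so meets $R_a^{+}$ in a single ray ending at $\omega_A(\alpha)$; hence $C_g^A\cap C_\alpha^A$ is bounded exactly when $\omega_A(\alpha)\notin\{\omega_A(g),\omega_A(g^{-1})\}$. I would secure this by using the slack in the construction: the variant $\alpha=g_3^{-n}g_2^{n}$ lies in the same two horizons with a different $A$-axis, and one also has all higher powers of the building blocks and all large $n$ available, so only finitely many candidate axes can be excluded.

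The step I expect to be the real obstacle is precisely the genericity needed to run these two arguments. Lemma~\ref{lem:pos-axis} requires its two input elements to have \emph{distinct} $\omega$-values, and the $A$-half requires in addition that $\omega_A(\alpha)$ avoid the two ends of $C_g^A$; both amount to knowing that the four horizon intersections are not ``pinched'' onto single boundary points. I would deduce this from Proposition~\ref{prop:dense-endpoints} together with irreducibility: a half-tree such as $R_b^{-}$ cuts out a non-empty open subset of $\partial B$, so axis endpoints abound in it, and any offending representative $g_i$ can be traded for a conjugate $w^{-1}g_i w$ --- with $w$ chosen, using the non-reducibility of the relevant action, not to fix the boundary point in question --- without leaving the horizon intersection it came from. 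Verifying that such a replacement is always available, and, if necessary, first shrinking $a$ and $b$ so that their interiors miss branch points (harmless, since passing to a sub-arc only enlarges the horizons), is the delicate part of the write-up.
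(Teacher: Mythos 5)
Your proposal has a genuine gap, and it is the one you flag yourself: every application of Lemma~\ref{lem:pos-axis} requires its two inputs to have \emph{distinct} forward ends, and nothing guarantees this for arbitrarily chosen $g_1,\dots,g_4$ (for instance $g_1\in\ebox{a}\cap\ebox{b}$ and $g_2\in\ebox{a}\cap\ebox{\bar b}$ may well satisfy $\omega_A(g_1)=\omega_A(g_2)$, since both ends merely lie in $\omega_A(R_a^+)$). Producing representatives of the four horizon intersections whose axes are pairwise independent is essentially the content of the lemma being proved, so your reduction relocates the difficulty rather than removing it. The proposed repair --- trade an offending $g_i$ for a conjugate $w^{-1}g_iw$ using density of axis endpoints --- is not carried out, and it is not routine: the replacement must simultaneously preserve membership in a horizon in $A$ \emph{and} a horizon in $B$ while moving the relevant end in one of the trees, and this must be done coherently for several pairs at once. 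The same unresolved genericity appears a second time in your $A$-pair, where you need $\omega_A(\alpha)\notin\partial_A g$ and assert that ``only finitely many candidate axes can be excluded'' without showing that your variants ($g_3^{-n}g_2^n$ versus $g_4^{-n}g_2^n$, or varying $n$) actually yield more than one candidate end.

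For contrast, the paper's proof is much shorter and does not use Lemmas~\ref{lem:arc-axis} or~\ref{lem:pos-axis} at all. It takes \emph{arbitrary} $g\in\ebox{a}\cap\ebox{b}$ and $\alpha\in\ebox{a}\cap\ebox{\bar b}$; if $C_g^A\cap C_\alpha^A$ already happens to be bounded there is nothing to do, and otherwise one picks (by irreducibility, via Proposition~\ref{prop:irreducible-union}) an $A$-hyperbolic $s$ with $C_s^A\cap C_\alpha^A$ bounded and replaces $g$ by the conjugate $g^Nsg^{-N}$ for $N$ large, which lands back in $\ebox{a}\cap\ebox{b}$ while its $A$-axis, a translate of $C_s^A$, meets $C_\alpha^A$ in a bounded set. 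That is, the paper perturbs \emph{after} checking whether a perturbation is needed, and only one element in one tree ever needs adjusting --- precisely the step your approach forces you to perform for all four building blocks in both trees simultaneously. Your architecture is also backwards relative to the paper's: there, Lemma~\ref{lem:char-reps} and Corollary~\ref{cor:char-ends} exist to \emph{supply} the distinct-ends hypothesis of Lemma~\ref{lem:pos-axis} in the proof of Lemma~\ref{lem:compat}, whereas you are trying to run Lemma~\ref{lem:pos-axis} first. If you want to salvage your route, the cleanest fix is to prove the independence of the $g_i$ by the paper's conjugation trick --- at which point you have reproduced the paper's proof and the product constructions become unnecessary.
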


\begin{proof} 
	The argument is symmetric so we give the construction of $g$ and
	$\alpha$.  Since $A$ and $B$ are incompatible the relevant sets are
	non-empty.  Take any $g\in \ebox{a}\cap\ebox{b}$ and $\alpha\in
	\ebox{a}\cap\ebox{\bar{b}}$. If $C_g^A\cap C_\alpha^A$ is bounded we are done.
	Suppose $C_g^A\cap C_\alpha^A$ contains a ray. Let $s\in G$ be any
	$A$-hyperbolic element such that $C_s^A \cap C_\alpha^A$ is bounded.
	Such an element exists since $A$ is irreducible (see
	Proposition~\ref{prop:irreducible-union}). If $s$ is elliptic in $B$
	then $\alpha s$ is hyperbolic in both $A$ and $B$ and $C_{\alpha
	s}^A\cap C_\alpha^A$ is bounded, so we may suppose that $s$ is
	hyperbolic in both $A$ and $B$.  Since $g\in\ebox{a}\cap\ebox{b}$ there
	is some $N > 0$ such that $g^Nsg^{-N}\in\ebox{a}\cap\ebox{b}$.  Take
	$g' = g^Nsg^{-N}$. By construction $C_{g'}^A \cap C_\alpha^A$ is
	bounded, so $g',\alpha$ is the desired pair.
\end{proof}

\begin{corollary}\label{cor:char-ends}
	The group elements $g$ and $\alpha$ are $A$-independent, and the group
	elements $h$ and $\beta$ are $B$-independent.
\end{corollary}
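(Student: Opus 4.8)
The plan is to read the corollary straight off the geometric conclusion of Lemma~\ref{lem:char-reps} --- that $C_g^A\cap C_\alpha^A$ is bounded, and symmetrically that $C_h^B\cap C_\beta^B$ is bounded --- using the following elementary fact about real trees: two hyperbolic isometries whose axes meet in a bounded set have disjoint $T$-boundaries. Note first that in the construction of Lemma~\ref{lem:char-reps} the elements $g$ and $\alpha$ are produced as members of asymptotic horizons of arcs in $A$, so they are hyperbolic in $A$; hence $\partial_A g=\{\omega_A(g),\omega_A(g^{-1})\}$ and $\partial_A\alpha=\{\omega_A(\alpha),\omega_A(\alpha^{-1})\}$ are exactly the endpoint pairs of the axes $C_g^A$ and $C_\alpha^A$, and proving $A$-independence amounts to showing these two axes have no common endpoint in $\partial A$.

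The one auxiliary point to record is: if two bi-infinite geodesics $L_1,L_2$ in a real tree $T$ share an endpoint $\xi\in\partial T$, then $L_1\cap L_2$ contains a geodesic ray. To see this, pick $x\in L_1$ and $y\in L_2$; the forward ray of $L_1$ towards $\xi$ is $[x,\xi)$ and that of $L_2$ towards $\xi$ is $[y,\xi)$. In a real tree any two geodesic rays representing the same boundary point eventually coincide: this follows from unique geodesics together with the basepoint-change description of $\partial T$ (\cite{bridson-haefliger}*{Proposition II.8.8}), since the tightening of the concatenation $[y,x][x,\xi)$ is $[y,\xi)$ and tightening in a tree only removes an initial backtracking segment. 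Thus $[x,\xi)$ and $[y,\xi)$ share a common sub-ray $\rho$, and $\rho\subseteq L_1\cap L_2$.

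Applying this with $L_1=C_g^A$, $L_2=C_\alpha^A$: any $\xi\in\partial_A g\cap\partial_A\alpha$ would force $C_g^A\cap C_\alpha^A$ to contain a ray, contradicting the boundedness supplied by Lemma~\ref{lem:char-reps}. Hence $\partial_A g\cap\partial_A\alpha=\emptyset$, i.e. $g$ and $\alpha$ are $A$-independent. Running the identical argument with $A$ replaced by $B$ and $(g,\alpha)$ replaced by $(h,\beta)$ gives that $h$ and $\beta$ are $B$-independent, completing the proof.

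There is no real obstacle here; the only step requiring care is the auxiliary claim that asymptotic rays in a tree share a sub-ray, and even that is standard. The bounded-intersection hypothesis of Lemma~\ref{lem:char-reps} does all the work, ruling out in one stroke the single bad configuration (a shared endpoint at infinity) and covering uniformly the sub-cases in which the two axes are disjoint, meet in a point, or meet in a bounded arc.
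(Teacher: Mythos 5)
Your argument is correct and is exactly the route the paper intends: the paper states this corollary without proof as an immediate consequence of the bounded intersection $C_g^A\cap C_\alpha^A$ (resp.\ $C_h^B\cap C_\beta^B$) produced by Lemma~\ref{lem:char-reps}, since a shared point of $\partial_A g\cap\partial_A\alpha$ would force the two axes to share a ray. Your filling-in of the standard fact that asymptotic rays in a real tree eventually coincide is accurate and complete.
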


For irreducible $G$-trees, the three definitions of compatibility are
equivalent. The strategy suggested by the pictures is to use boundary points to
pick suitable elements of $G$. This philosophy guides the proof below.

\begin{lemma}\label{lem:compat}
	Suppose $\ell$ and $m$ are length functions on $G$ corresponding to the
	irreducible $G$-trees $A$ and $B$ respectively. The following are
	equivalent.
	\begin{enumerate}
		\item The length functions $\ell$ and $m$ \emph{do not} have
			compatible combinatorics.
		\item The length functions $\ell$ and $m$ \emph{are not}
			coherently oriented.
		\item The trees $A$ and $B$ \emph{have} incompatible horizons.
	\end{enumerate}
\end{lemma}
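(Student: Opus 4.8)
The plan is to route all three conditions through~(3): I would establish $(1)\Rightarrow(3)$, $(2)\Rightarrow(3)$, $(3)\Rightarrow(1)$ and $(3)\Rightarrow(2)$, which together give $(1)\Leftrightarrow(3)\Leftrightarrow(2)$. The two implications into~(3) amount to reading the asymptotic horizons of suitable arcs off an axis configuration; the two implications out of~(3) are the substantive ones, built on Lemmas~\ref{lem:char-reps},~\ref{lem:arc-axis} and~\ref{lem:pos-axis}.

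For $(1)\Rightarrow(3)$, suppose $(g,h)\in\sO^\ell\cap\sD^m$ (the case $\sD^\ell\cap\sO^m$ is symmetric). By Lemma~\ref{lem:c-hyp}, $g,h$ are hyperbolic in $A$, so $C_g^A\cap C_h^A$ contains an arc; replacing $h$ by $h^{-1}$ if needed (harmless for membership in both $\sO^\ell$ and $\sD^m$), take an oriented positive-length sub-arc $a\subseteq C_g^A\cap C_h^A$ along which the orientations of $C_g^A$ and $C_h^A$ agree, so $g,h\in\ebox{a}$ and $g^{-1},h^{-1}\in\ebox{\bar{a}}$. In $B$ the sets $C_g^B$ and $C_h^B$ are disjoint subtrees; orient a positive-length sub-arc $b$ of the bridge between them from the $g$-side to the $h$-side, so that, when $g$ and $h$ are $B$-hyperbolic, $g,g^{-1}\in\ebox{\bar{b}}$ and $h,h^{-1}\in\ebox{b}$. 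Then $h\in\ebox{a}\cap\ebox{b}$, $h^{-1}\in\ebox{\bar{a}}\cap\ebox{b}$, $g\in\ebox{a}\cap\ebox{\bar{b}}$ and $g^{-1}\in\ebox{\bar{a}}\cap\ebox{\bar{b}}$, so all four sets of Definition~\ref{def:comp-tree} are nonempty and $A,B$ are incompatible; if $g$ or $h$ is elliptic in $B$, first replace the pair with one that is hyperbolic in both trees and exhibits the same overlap-in-$A$, disjoint-in-$B$ pattern (replace the offending element by a product with a $B$-hyperbolic element whose $B$-axis lies on the correct side of $b$ and whose $A$-axis crosses $a$, as in the proof of Lemma~\ref{lem:char-reps}). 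The implication $(2)\Rightarrow(3)$ is the same bookkeeping: if $(g,h)\in\sO^\ell\cap\sO^m$ witnesses failure of coherent orientation, then by Lemma~\ref{lem:cm-lf-isect} the axes overlap in an arc in both trees, and, normalising so their orientations agree on the overlap in $A$, they disagree on the overlap in $B$; taking positive-length sub-arcs $a\subseteq C_g^A\cap C_h^A$ and $b\subseteq C_g^B\cap C_h^B$ oriented along $C_g$, one checks $g\in\ebox{a}\cap\ebox{b}$, $h^{-1}\in\ebox{\bar{a}}\cap\ebox{b}$, $h\in\ebox{a}\cap\ebox{\bar{b}}$ and $g^{-1}\in\ebox{\bar{a}}\cap\ebox{\bar{b}}$ (no ellipticity can occur, as membership in $\sO$ forces hyperbolicity in both trees).

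For $(3)\Rightarrow(1)$ and $(3)\Rightarrow(2)$, let $a\subseteq A$ and $b\subseteq B$ witness incompatibility and use Lemma~\ref{lem:char-reps} and Corollary~\ref{cor:char-ends} to obtain $g\in\ebox{a}\cap\ebox{b}$, $\alpha\in\ebox{a}\cap\ebox{\bar{b}}$ with $C_g^A\cap C_\alpha^A$ bounded, and $h\in\ebox{\bar{a}}\cap\ebox{\bar{b}}$, $\beta\in\ebox{\bar{a}}\cap\ebox{b}$ with $C_h^B\cap C_\beta^B$ bounded; since the axis endpoints of the members of each horizon intersection are dense in a nonempty open subset of the relevant boundary (Proposition~\ref{prop:dense-endpoints}), one may further arrange that any pair invoked below has distinct endpoints in whichever tree Lemma~\ref{lem:pos-axis} requires. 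For $(3)\Rightarrow(2)$ put $x=h^{-n}g^n$ and $y=\beta^{-n}\alpha^n$ with $n$ large: Lemma~\ref{lem:arc-axis} in $A$ (using $g,\alpha\in\ebox{a}$, $h,\beta\in\ebox{\bar{a}}$) gives $a\subseteq C_x^A\cap C_y^A$ with the induced orientations both agreeing with $a$, while in $B$ it gives $b\subseteq C_x^B$ with matching orientation (from $g\in\ebox{b}$, $h\in\ebox{\bar{b}}$) and $b\subseteq C_y^B$ with the \emph{opposite} orientation (apply it to $y^{-1}=\alpha^{-n}\beta^n$, from $\beta\in\ebox{b}$, $\alpha\in\ebox{\bar{b}}$). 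Hence $(x,y)\in\sO^\ell\cap\sO^m$, and by Lemma~\ref{lem:cm-lf-isect} the agreement in $A$ and disagreement in $B$ give $\ell(xy^{-1})<\ell(xy)$ but $m(xy^{-1})>m(xy)$, contradicting coherent orientation. For $(3)\Rightarrow(1)$, put $x=\beta^{-n}g^n$ (using $\beta\in\ebox{\bar{a}}$, so $a\subseteq C_x^A$ by Lemma~\ref{lem:arc-axis}) and apply Lemma~\ref{lem:pos-axis} in $B$ to $g,\beta\in\ebox{b}$ with distinct $B$-endpoints to get $C_x^B\subseteq R_b^+$; symmetrically $y=h^{-n}\alpha^n$ gives $a\subseteq C_y^A$ and $C_y^B\subseteq R_{\bar{b}}^+=R_b^-$. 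Then $a\subseteq C_x^A\cap C_y^A$, while $C_x^B$ and $C_y^B$ lie in the two components of $B\setminus b^\circ$ and hence are disjoint at distance at least $\len(b)>0$; by Lemmas~\ref{lem:cm-lf-isect} and~\ref{lem:cm-lf-disjoint}, $(x,y)\in\sO^\ell\cap\sD^m$, so compatible combinatorics fails. In each case $n$ is chosen large enough for every invoked instance of Lemmas~\ref{lem:arc-axis} and~\ref{lem:pos-axis} simultaneously.

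I expect the direction out of~(3) to be the main obstacle, and within it the need to control a \emph{single} group element's axis in $A$ and in $B$ at once --- containing $a$ with a prescribed orientation in $A$, and either containing $b$ with a prescribed orientation or lying on a prescribed side of $b$ in $B$ --- which forces one to manufacture, inside each of the four horizon intersections, elements that are hyperbolic in both trees and have distinct endpoints where Lemma~\ref{lem:pos-axis} demands. This is the same ``calculus of axes'' surgery already used in Lemmas~\ref{lem:char-reps} and~\ref{lem:gp-free-indep}, driven by irreducibility (Proposition~\ref{prop:irreducible-union}) and density of axis endpoints (Proposition~\ref{prop:dense-endpoints}). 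The remaining bookkeeping --- converting ``overlap with agreeing (resp.\ disagreeing) orientation'' into the strict inequalities among $\ell(xy)$ and $\ell(xy^{-1})$ via Lemma~\ref{lem:cm-lf-isect}, and disposing of elements that happen to be elliptic in one of the two trees by passing to suitable products --- is routine.
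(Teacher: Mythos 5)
Your proposal follows essentially the same route as the paper's proof: the same implication scheme $(1)\Rightarrow(3)$, $(2)\Rightarrow(3)$, $(3)\Rightarrow(1),(2)$, the same reading of horizons off axis configurations for the implications into~(3), and the same products $h^{-n}g^{n}$ built from Lemmas~\ref{lem:char-reps}, \ref{lem:arc-axis} and~\ref{lem:pos-axis} for the implications out of~(3); your explicit treatment of elements elliptic in one of the two trees in $(1)\Rightarrow(3)$ is a point the paper passes over silently, and is welcome.

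The one place you should tighten is $(3)\Rightarrow(1)$. You pair $g$ with $\beta$ and $h$ with $\alpha$ (landing in $\sO^{\ell}\cap\sD^{m}$, which is fine, as compatible combinatorics is symmetric in the two sets), but Lemma~\ref{lem:pos-axis} then needs $\omega_B(g)\neq\omega_B(\beta)$ and $\omega_B(h)\neq\omega_B(\alpha)$, and neither of these is supplied by Lemma~\ref{lem:char-reps} or Corollary~\ref{cor:char-ends}, which only control the pairs $(g,\alpha)$ in $A$ and $(h,\beta)$ in $B$. Your appeal to Proposition~\ref{prop:dense-endpoints} to ``arrange distinct endpoints'' is not enough as stated: that proposition concerns density of all axis endpoints in $\partial B$, not density of the $B$-endpoints of elements constrained to lie in a horizon intersection such as $\ebox{a}\cap\ebox{b}$, so an actual perturbation argument (e.g.\ the conjugation-by-powers surgery from the proof of Lemma~\ref{lem:char-reps}) is required. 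The paper sidesteps this by using the pairing $\rho=\alpha^{-N}g^{N}$, $\sigma=\beta^{-M}h^{M}$, for which the independence demanded by Lemma~\ref{lem:pos-axis} is (at least for the pair $(g,\alpha)$) exactly what Lemma~\ref{lem:char-reps} delivers; adopting that pairing, or spelling out the perturbation, closes the gap.
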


\begin{proof}
	We will show $1 \Leftrightarrow 3$ and $2\Leftrightarrow 3$.

	\emph{($1 \Rightarrow 3$.)} Suppose, without loss of generality, $(g,h)\in
	\sD^\ell \cap \sO^m$. In $A$, by definition $C_g^A\cap C_h^A =
	\emptyset$; let $a\subseteq A$ be the geodesic joining $C_g^A$ and
	$C_h^A$, oriented so that $t(a)\in C_g^A$. We have $g^{\pm}\in
	\ebox{a}$ and $h^\pm\in\ebox{\bar{a}}$. In $B$, again by definition
	there is an arc $b = C_g^B\cap C_h^B$. Without loss of generality we
	assume $g$ and $h$ induce the same orientation on $b$, and use this
	orientation. Then $g, h \in \ebox{b}$ and $g^{-1},h^{-1} \in
	\ebox{\bar{b}}$. We conclude the four sets
		\[ \ebox{a}\cap\ebox{b}\quad
		   \ebox{\bar{a}}\cap\ebox{b}\quad
		   \ebox{a}\cap\ebox{\bar{b}}\quad
		   \ebox{\bar{a}}\cap\ebox{\bar{b}}, \]
	are all non-empty. (See~Figure~\ref{fig:inc-com} for an illustration.)

	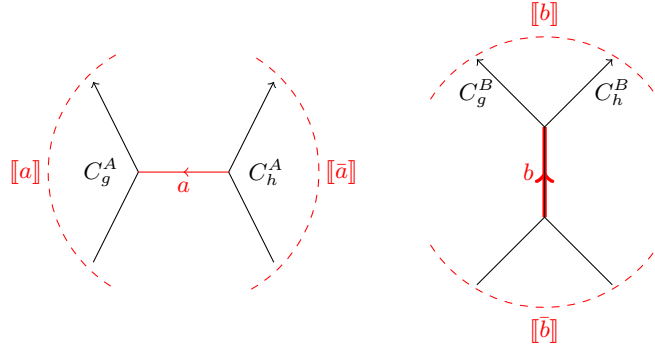
\begin{figure}
		\begin{center}
			\begin{tikzpicture}[scale=.3,font=\small,decoration={
    markings,
    mark=at position 0.5 with {\arrow{>}}}
]
	\begin{scope}[shift={(-8,0)}]
	\coordinate (mg) at (-2,0);
	\coordinate (mh) at (2,0);

	\draw (-4,-4) -- (mg) [->] -- (-4,4);

	\draw (4,-4) -- (mh) [->] -- (4,4);

	\draw [postaction={decorate},red] (mh) -- (mg) node [midway, below]
	{$a$};

	\node [right =4pt of mh] {$C_h^A$};
	\node [left =4pt of mg] {$C_g^A$};

	\draw [dashed, red] ([shift={(120:6)}]0,0) arc (120:240:6);
	\draw [dashed, red] ([shift={(-60:6)}]0,0) arc (-60:60:6);

	\node [red] at (-7,0) {$\ebox{a}$};
	\node [red] at (7,0) {$\ebox{\bar{a}}$};
	\end{scope}
	\begin{scope}[shift={(8,0)}]
	\coordinate (o) at (0,-2);
	\coordinate (t) at (0,2);
	\coordinate (wg) at (-3,5);
	\coordinate (wh) at (3,5);

	\draw [ultra thick, postaction={decorate},red] (o) -- (t) node [midway,
	left] {$b$};

	\draw (-3,-5) -- (o) -- (t) [->] -- (wg);

	\draw (3,-5) -- (o);
	\draw [->] (t) -- (wh);

	\node [below =4pt of wh] {$C_h^B$};
	\node [below =4pt of wg] {$C_g^B$};

	\draw [dashed, red] ([shift={(30:6)}]0,0) arc (30:150:6);
	\draw [dashed, red] ([shift={(-30:6)}]0,0) arc (-30:-150:6);

	\node [red] at (0,7) {$\ebox{b}$};
	\node [red] at (0,-7) {$\ebox{\bar{b}}$};
	\end{scope}

\end{tikzpicture}
 			\caption{Incompatible combinatorics implies
			incompatible trees.}
			\label{fig:inc-com}
		\end{center}
	\end{figure}
	
	\emph{($2 \Rightarrow 3$.)} Let $g, h \in G$ witness the incoherent
	orientation of $\ell$ and $m$, so that $\ell(gh^{-1}) < \ell(gh)$ but
	$m(gh^{-1}) > m(gh)$. Let $a = C_g^A\cap C_h^A$ and $b = C_g^B \cap
	C_h^B$. Since $(g,h)\in \sO^\ell \cap \sO^m$, both $a$ and $b$ are
	arcs. Orient $a$ according to the orientation induced by $g$ on
	$C_g^A$, and similarly orient $b$. The inequality implies that the
	orientation on $a$ induced by $h$ agrees with the orientation on $a$;
	thus $g, h \in \ebox{a}$ and $g^{-1},h^{-1} \in \ebox{\bar{a}}$.
	Similarly, the inequality $m(gh^{-1}) > m(gh)$ implies
	$g,h^{-1}\in\ebox{b}$ and $g^{-1},h \in \ebox{\bar{b}}$. We conclude
	the four sets
		\[ \ebox{a}\cap\ebox{b}\quad
		   \ebox{\bar{a}}\cap\ebox{b}\quad
		   \ebox{a}\cap\ebox{\bar{b}}\quad
		   \ebox{\bar{a}}\cap\ebox{\bar{b}}, \]
	are all non-empty. (See~Figure~\ref{fig:inc-or} for an illustration.)

	\begin{figure}
		\begin{center}
			
\begin{tikzpicture}[scale=.3,font=\small,decoration={
    markings,
    mark=at position 0.5 with {\arrow{>}}}
]
	\begin{scope}[shift={(-8,0)}]
	\coordinate (o) at (0,-2);
	\coordinate (t) at (0,2);
	\coordinate (wg) at (-3,5);
	\coordinate (wh) at (3,5);

	\draw [ultra thick, postaction={decorate},red] (o) -- (t) node [midway,
	left] {$a$};

	\draw (-3,-5) -- (o) -- (t) [->] -- (wg);

	\draw (3,-5) -- (o);
	\draw [->] (t) -- (wh);

	\node [below =4pt of wh] {$C_h^A$};
	\node [below =4pt of wg] {$C_g^A$};

	\draw [dashed, red] ([shift={(30:6)}]0,0) arc (30:150:6);
	\draw [dashed, red] ([shift={(-30:6)}]0,0) arc (-30:-150:6);

	\node [red] at (0,7) {$\ebox{a}$};
	\node [red] at (0,-7) {$\ebox{\bar{a}}$};
	\end{scope}

	\begin{scope}[shift={(8,0)}]
	\coordinate (o) at (0,-2);
	\coordinate (t) at (0,2);
	\coordinate (wg) at (-3,5);
	\coordinate (wh) at (3,-5);

	\draw [ultra thick, postaction={decorate},red] (o) -- (t) node [midway,
	left] {$b$};

	\draw (-3,-5) -- (o) -- (t) [->] -- (wg);

	\draw (3,5) -- (t);
	\draw [->] (o) -- (wh);

	\node [above =4pt of wh] {$C_h^B$};
	\node [below =4pt of wg] {$C_g^B$};

	\draw [dashed, red] ([shift={(30:6)}]0,0) arc (30:150:6);
	\draw [dashed, red] ([shift={(-30:6)}]0,0) arc (-30:-150:6);

	\node [red] at (0,7) {$\ebox{b}$};
	\node [red] at (0,-7) {$\ebox{\bar{b}}$};
	\end{scope}

\end{tikzpicture}
 			\caption{Incoherent orientation implies incompatible
			trees.}
			\label{fig:inc-or}
		\end{center}
	\end{figure}
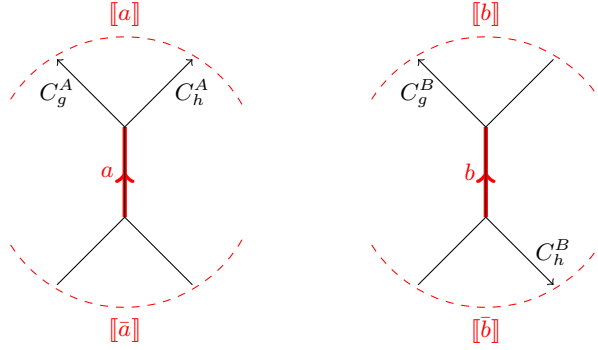

	\emph{($3 \Rightarrow 1 \mbox{ and } 2$.)} Let $a\subseteq A$ and
	$b\subseteq B$ be arcs witnessing the incompatibility of $A$ and $B$.
	Fix group elements $g\in \ebox{a} \cap \ebox{b}, h \in \ebox{\bar{a}}
	\cap \ebox{\bar{b}}, \alpha \in \ebox{a} \cap \ebox{\bar{b}},$ and
	$\beta \in \ebox{\bar{a}} \cap \ebox{b}$ using Lemma
	\ref{lem:char-reps}; by Corollary~\ref{cor:char-ends} the ends of $g$
	and $\alpha$ in $A$ are distinct, and the ends of $h$ and $\beta$ in
	$B$ are distinct.

	Let $N_B$ be the integer guaranteed by Lemma~\ref{lem:arc-axis} applied
	to $g$ and $\alpha$ in $B$, and $N_A$ be the integer supplied by Lemma
	\ref{lem:pos-axis} applied to $g$ and $\alpha$ in $A$. (Note that the
	hypothesis of Lemma~\ref{lem:pos-axis} on the ends of $g$ and $\alpha$ is satisfied.) Set $N =
	\max\{N_A, N_B\}$ and consider $\rho = \alpha^{-N}g^N$. Lemma
	\ref{lem:arc-axis} implies $b \subseteq C_\rho^B$, and Lemma
	\ref{lem:pos-axis} implies $C_\rho^A\subseteq R_a^+$. Choose
	$M$ by a similar process applied to $h$ and $\beta$, so that $\sigma =
	\beta^{-M}h^M$ satisfies $b\subseteq C_\sigma^B$ and $C_\sigma^A
	\subseteq R_a^-$. By construction $C_\rho^B \cap C_\sigma^B \supseteq
	b$, so $(\rho,\sigma)\in \sO^m$; and $C_\rho^A\cap C_\sigma^A =
	\emptyset$, so $(\rho, \sigma)\in \sD^\ell$. Hence $\sD^\ell \cap \sO^m
	\neq \emptyset$ and $\ell$ and $m$ do not have compatible
	combinatorics, as required.

	Continuing the theme, let $J_a$ be the integer given by Lemma
	\ref{lem:arc-axis} applied to $g, h$ and $a\subseteq A$, $J_b$ be the integer
	given by the application to $g, h$ and $b\subseteq B$, and $J = \max\{
		J_a, J_b\}$. Similarly, let $K_a$ be the integer given by Lemma
	\ref{lem:arc-axis} applied to $\alpha, \beta$ and $a$, $K_b$ be the
	integer given by the application to $\alpha,\beta$ and
	$\bar{b}\subseteq B$ (note the reversed orientation), and $K = \max\{
		K_a, K_b\}$. Consider $c=h^{-J}g^J$ and $\gamma =
	\beta^{-K}\alpha^K$. By Lemma~\ref{lem:arc-axis} $a\subseteq C_a^A\cap
	C_\gamma^A$ and all three orientations agree; however $b\subseteq
	C_c^B\cap C_\gamma^B$, but the orientation of $C_c^B$ induced by $c$
	agrees with $b$, while that of $C_\gamma^B$ induced by $\gamma$ agrees
	with $\bar{b}$. Translating this to the length functions $\ell$ and $m$
	we find $(c,\gamma)\in \sO^\ell \cap \sO^m$ and $\ell(c\gamma^{-1}) <
	\ell(c\gamma)$ but $m(c\gamma^{-1}) > m(c\gamma)$, hence $\ell$ and $m$
	are not coherently oriented, as required.
\end{proof}

In light of this lemma a single definition of compatible will be used
throughout the remainder of this article.

\begin{definition}\label{def:compat}
	Two irreducible $G$-trees $A$ and $B$ with length functions $\ell$ and
	$m$ are \emph{synthetically compatible} if, equivalently
	\begin{itemize}
		\item The length functions $\ell$ and $m$ have compatible
			combinatorics.
		\item The length functions $\ell$ and $m$ are coherently
			oriented.
		\item The trees $A$ and $B$ have compatible horizons.
	\end{itemize}
\end{definition}

Note that this definition applies equally well to projective classes of trees.
The first two points depend only on the projective class, so if $\ell$ and $m$
are synthetically compatible then so are $s\ell$ and $tm$ for all $s,t\in
\mathbb{R}_{> 0}$.
 
\section{Synthetic compatibility is equivalent to a common refinement}
\label{sec:comb-add}

\begin{theorem}\label{thm:synth-add}
	Suppose $\ell$ and $m$ are length functions on a group $G$. The sum
	$\ell+m$ is a length function on $G$ if and only if $\ell$ and $m$ are
	synthetically compatible.
\end{theorem}

\begin{proof} First observe that $\ell+m$ always satisfies length function axioms
	I--III. We will focus on IV--VI.

	For the forward implication, suppose $\ell+m$ is a length function on
	$G$. For a contradiction suppose that $\ell$ and $m$ do not have
	coherent orientation, and there is some pair $(g,h)\in \sO^\ell \cap
	\sO^m$ such that $\ell(gh^{-1}) < \ell(gh)$ and $m(gh) < m(gh^{-1})$.
	By Lemma~\ref{lem:c-hyp}, $g$ and $h$ are hyperbolic with respect to
	both $\ell$ and $m$, so both $g$ and $h$ must be hyperbolic in
	$\ell+m$. Length function axiom V implies that for $\ell$ and $m$
	respectively, 
	\begin{align*}
		\ell(gh) &= \ell(g) + \ell(h) \\
		&\mbox{and} \\
		m(gh^{-1}) &= m(g) + m(h).
	\end{align*}
	Taking a sum we have
		\[ \ell(gh) + m(gh^{-1}) = \ell(g) + m(g) + \ell(h) + m(h). \]
	By hypothesis, both
	\begin{align*}
		\ell(gh) + m(gh) &< \ell(gh) + m(gh^{-1}) \\
		\ell(gh^{-1}) + m(gh^{-1}) &< \ell(gh) + m(gh^{-1}).
	\end{align*}
	We conclude that
	\begin{gather*}
		\max\{(\ell + m)(gh), (\ell + m)(gh^{-1})\} < \ell(gh) + m(gh^{-1}) \\
		= (\ell + m)(g) + (\ell + m)(h).
	\end{gather*}
	This is a contradiction, since $\ell + m$ satisfies length function
	axiom V, which implies the above strict inequality must be equality. We
	conclude that $\ell$ and $m$ are compatible.

	For the converse, suppose $\ell$ and $m$ are compatible. As remarked
	previously, $\ell + m$ satisfies length function axioms I--III. We will
	show $\ell + m$ satisfies the remaining axioms.

	\emph{Axiom IV.} Suppose $g, h \in G$. We will proceed through the
	following cases:
	\begin{itemize}
		\item $(g,h) \in \sO^\ell$,
		\item $(g,h) \in \sO^m$,
		\item $(g,h) \in P(G) \setminus (\sO^\ell
			\cup \sO^m).$
	\end{itemize}

	\emph{Case $(g,h)\in \sO^\ell$.} Since $\ell$ satisfies axiom IV, 
		\[ \max\{ \ell(gh),\ell(gh^{-1})\} \leq \ell(g) + \ell(h). \]
	Since $\ell$ and $m$ have compatible combinatorics, $(g,h)\in P(G)
	\setminus \sD^m$, which implies that
		\[ \max\{m(gh),m(gh^{-1}) \} \leq m(g)+m(h). \]
	Hence we may calculate
	\begin{align*}
		\max\{(\ell(gh)+m(gh),\ell(gh^{-1})+m(gh^{-1})\} &\leq
		\max\{\ell(gh),\ell(gh^{-1})\} \\
		&+ \max\{m(gh),m(gh^{-1})\} \\
		&\leq \ell(g)+\ell(h)+m(g)+m(h)
	\end{align*}
	and conclude that in this case $\ell+m$ satisfies axiom IV.

	\emph{Case $(g,h)\in \sO^m$.} The proof is symmetric with the previous
	case.

	\emph{Case $(g,h) \in P(G) \setminus (\sO^\ell \cup \sO^m)$.} In this
	case, by hypothesis both
	\begin{align*}
		\ell(gh) &= \ell(gh^{-1}) \\
		&\mbox{and} \\
		m(gh) &= m(gh^{-1}).
	\end{align*}
	Adding, we conclude
		\[ \ell(gh)+m(gh) = \ell(gh^{-1})+m(gh^{-1}) \]
	as required.

	\emph{Axiom V.} Suppose $g, h\in G$ satisfy $\ell(g) + m(g) > 0$ and
	$\ell(h) + m(h) > 0$. This implies that $g$ and $h$ are both hyperbolic
	in at least one of $\ell$ and $m$. We proceed through the same cases.
	\begin{itemize}
		\item $(g,h) \in \sO^\ell$,
		\item $(g,h) \in \sO^m$,
		\item $(g,h) \in P(G) \setminus (\sO^\ell\cup \sO^m)$.
	\end{itemize}

	\emph{Case $(g,h)\in \sO^\ell$.} In this case, since $\ell$ and $m$ are
	compatible, $(g,h) \notin \sD^m$ and we argue by subcases.
	\begin{itemize}
		\item $m(g) > 0$ and $m(h) > 0$,
		\item $m(g) = 0$ and $m(h) \ge 0$,
		\item $m(g) \ge 0$ and $m(h) = 0$.
	\end{itemize}

	\emph{Subcase $m(g) > 0$ and $m(h) > 0$.} Since $\ell$ and $m$ are
	coherently oriented we have, without loss of generality,
	\begin{align*}
		\ell(gh^{-1}) &< \ell(gh) \\
		&\mbox{and} \\
		m(gh^{-1}) &\le m(gh)
	\end{align*}
	Appealing to axiom V for $\ell$ and $m$ we have,
	\begin{align*}
		\ell(gh) &= \ell(g) + \ell(h) \\
		&\mbox{and} \\
		m(gh) & = m(g) + m(h).
	\end{align*}
	Summing, we conclude
	\begin{align*}
		\ell(gh^{-1}) + m(gh^{-1}) &\leq \ell(gh) + m(gh) \\
		&= \ell(g)+m(g)+\ell(h)+m(h).
	\end{align*}
	Therefore in this subcase $\ell+m$ satisfies axiom V.

	\emph{Subcase $m(g) = 0$ and $m(h)\geq 0$.} By Lemma~\ref{lem:c-hyp},
	$m(gh^{-1}) = m(gh) = m(h)$, so axiom $V$ for $\ell+m$ follows
	immediately from axiom $V$ for $\ell$.

	\emph{Subcase $m(g)\geq 0$ and $m(h) = 0$.} This subcase is symmetric
	with the previous one.

	\emph{Case $(g,h) \in \sO^m$.} This case is symmetric with the previous
	case.

	\emph{Case $(g,h)\in P(G) \setminus (\sO^\ell \cup \sO^m)$.} In this
	case we have
	\begin{align*}
		\ell(gh) = \ell(gh^{-1}) &= \ell(g) + \ell(h) +\Delta_\ell\\
		m(gh) = m(gh^{-1}) &= m(g)+m(h) +\Delta_m
	\end{align*}
	for real numbers $\Delta_\ell,\Delta_m \geq 0$. Immediately we have
	that
		\[ \ell(gh)+m(gh) = \ell(gh^{-1})+m(gh^{-1}) \]
	and from 
		\[ \ell(gh)+m(gh) = \ell(g)+m(g)+\ell(h)+m(h) +
		\Delta_\ell+\Delta_m \]
	we conclude that axiom V is satisfied by $\ell+m$.

	\emph{Axiom VI.} Finally we confirm that $\ell + m$ has a good pair of
	elements. Let $(g, h)$ be a good pair of elements for $\ell$, so that
	\[ 0 < \ell(g)+\ell(h) -\ell(gh^{-1}) < 2\min\{\ell(g),\ell(h)\}. \]

	We check the following cases
	\begin{itemize}
		\item $(g,h) \in \sO^m$,
		\item $(g,h) \notin \sO^m$.
	\end{itemize}

	\emph{Case $(g,h)\in \sO^m$.} In this case, since $\ell$ and $m$ are
	coherently oriented, 
		\[m(gh^{-1}) < m(gh).\]
	By Lemma~\ref{lem:c-hyp},
	$g$ and $h$ are hyperbolic in $m$. Therefore, by Lemma
	\ref{lem:cm-good-pair}, there are positive integers $a$ and $b$ so
	that $(g^a, h^b)$ is a good pair for $m$. Further by Lemma
	\ref{lem:cm-good-pair} the property of being a good pair is preserved
	under taking positive powers, so $(g^a, h^b)$ is a good pair for $\ell$
	also. Adding the good pair inequalities, we calculate
	\begin{align*}
		0 &< \ell(g^a)+m(g^a)+\ell(h^b)+m(h^b) -\ell(g^ah^{-b}) - m(g^ah^{-b}) \\
		&< 2(\min\{\ell(g^a),\ell(h^b)\}+\min\{m(g^a),m(h^b)\}) \\
		&\le 2\min\{\ell(g^a)+ m(g^a),\ell(h^b)+m(h^b)\}.
	\end{align*}
	Hence $(g^a,h^b)$ is a good pair for $\ell + m$.

	\emph{Case $(g,h)\notin\sO^m$.} In this case, since $\ell$ and $m$
	have compatible combinatorics, $(g,h)\notin\sD^m$, and we have
		\[ m(gh^{-1}) = m(gh) = m(g) + m(h). \]
	Adding this to the $\ell$ good pair inequality for $(g,h)$, we have
	\[ 0 < \ell(g)+\ell(h) - \ell(gh^{-1}) =
	\ell(g)+m(g)+\ell(h)+m(h)-\ell(gh^{-1})-m(gh^{-1}) \]
	Since
	\[ 2\min\{\ell(g),\ell(h)\} \leq 2\min\{\ell(g)+m(g),\ell(h)+m(h)\}, \]
	we conclude $(g,h)$ is again a good pair for $\ell + m$.

	This concludes the case analysis. We have verified axioms IV--VI for
	$\ell + m$, and conclude that $\ell + m$ is a length function, as
	required.
\end{proof}
 
\begin{bibdiv}
	\begin{biblist}
\bib{bbc}{article}{
  author={Behrstock, Jason},
  author={Bestvina, Mladen},
  author={Clay, Matt},
  title={Growth of intersection numbers for free group automorphisms},
  journal={J. Topol.},
  volume={3},
  date={2010},
  number={2},
  pages={280--310},
  issn={1753-8416},
  review={\MR {2651361}},
  doi={10.1112/jtopol/jtq008},
}

\bib{bering}{article}{
  author={Bering, Edgar A., IV},
  title={Uniform independence for Dehn twist automorphisms of a free group},
  journal={Proc. Lond. Math. Soc. (3)},
  volume={118},
  date={2019},
  number={5},
  pages={1115--1152},
  issn={0024-6115},
  review={\MR {3946718}},
  doi={10.1112/plms.12208},
}

\bib{bridson-haefliger}{book}{
  author={Bridson, M. R.},
  author={Haefliger, A.},
  title={Metric spaces of non-positive curvature},
  series={Grundlehren der Mathematischen Wissenschaften [Fundamental Principles of Mathematical Sciences]},
  volume={319},
  publisher={Springer-Verlag, Berlin},
  date={1999},
  pages={xxii+643},
  isbn={3-540-64324-9},
  review={\MR {1744486}},
  doi={10.1007/978-3-662-12494-9},
}

\bib{culler-morgan}{article}{
  author={Culler, Marc},
  author={Morgan, John W.},
  title={Group actions on $\mathbf {R}$-trees},
  journal={Proc. London Math. Soc. (3)},
  volume={55},
  date={1987},
  number={3},
  pages={571--604},
  issn={0024-6115},
  review={\MR {907233}},
  doi={10.1112/plms/s3-55.3.571},
}

\bib{guirardel-core}{article}{
  author={Guirardel, Vincent},
  title={C\oe ur et nombre d'intersection pour les actions de groupes sur les arbres},
  language={French, with English and French summaries},
  journal={Ann. Sci. \'Ecole Norm. Sup. (4)},
  volume={38},
  date={2005},
  number={6},
  pages={847--888},
  issn={0012-9593},
  review={\MR {2216833}},
  doi={10.1016/j.ansens.2005.11.001},
}

\bib{guirardel-levitt}{article}{
  author={Guirardel, Vincent},
  author={Levitt, Gilbert},
  title={JSJ decompositions of groups},
  language={English, with English and French summaries},
  journal={Ast\'{e}risque},
  number={395},
  date={2017},
  pages={vii+165},
  issn={0303-1179},
  isbn={978-2-85629-870-1},
  review={\MR {3758992}},
}

\bib{morgan-shalen}{article}{
  author={Morgan, John W.},
  author={Shalen, Peter B.},
  title={Valuations, trees, and degenerations of hyperbolic structures. I},
  journal={Ann. of Math. (2)},
  volume={120},
  date={1984},
  number={3},
  pages={401--476},
  issn={0003-486X},
  review={\MR {769158}},
  doi={10.2307/1971082},
}

\bib{surface-dual-tree}{article}{
  author={Morgan, John W.},
  author={Shalen, Peter B.},
  title={Free actions of surface groups on ${\bf R}$-trees},
  journal={Topology},
  volume={30},
  date={1991},
  number={2},
  pages={143--154},
  issn={0040-9383},
  review={\MR {1098910}},
  doi={10.1016/0040-9383(91)90002-L},
}

\bib{parry}{article}{
  author={Parry, Walter},
  title={Axioms for translation length functions},
  conference={ title={Arboreal group theory}, address={Berkeley, CA}, date={1988}, },
  book={ series={Math. Sci. Res. Inst. Publ.}, volume={19}, publisher={Springer, New York}, },
  date={1991},
  pages={295--330},
  review={\MR {1105338}},
  doi={10.1007/978-1-4612-3142-4\_11},
}

\bib{tits}{article}{
  author={Tits, J.},
  title={A ``theorem of Lie-Kolchin'' for trees},
  conference={ title={Contributions to algebra (collection of papers dedicated to Ellis Kolchin)}, },
  book={ publisher={Academic Press, New York}, },
  date={1977},
  pages={377--388},
  review={\MR {0578488}},
}

	\end{biblist}
\end{bibdiv}
\end{document}